\documentclass[11pt,psamsfonts]{amsart}
\usepackage{amsmath}
\usepackage{amsthm}
\usepackage{amssymb}
\usepackage{amscd}
\usepackage{amsfonts}
\usepackage{amsbsy}
\usepackage{epsfig,afterpage}
\usepackage[dvips]{psfrag}
\usepackage{color}     
\usepackage{overpic}
\usepackage{graphicx}

\newcommand{\R}{\ensuremath{\mathbb{R}}}

\newcommand{\N}{\ensuremath{\mathbb{N}}}

\newtheorem {theorem} {Theorem} 
\newtheorem {proposition} [theorem] {Proposition}

\newtheorem {lemma} [theorem] {Lemma}
\newtheorem {definition} [theorem] {Definition}
\newtheorem {remark} {Remark}

\begin{document}

\title[Attractivity, degeneracy and codimension of PSVFs]{Attractivity, degeneracy and codimension of a typical singularity in 3D piecewise smooth vector fields}

\author[T. Carvalho and M.A. Teixeira] {Tiago  Carvalho$^1$ and Marco A. Teixeira$^2$}

\address{$^1$ FC--UNESP, CEP 17033--360, Bauru, S\~ao Paulo, Brazil}

\address{$^2$ IMECC--UNICAMP, CEP 13081--970, Campinas, S\~ao Paulo, Brazil and UFSCar-campus Sorocaba CEP 18052-780, Sorocaba, S\~ao Paulo, Brazil}

\email{tcarvalho@fc.unesp.br}

\email{teixeira@ime.unicamp.br}

\subjclass[2010]{Primary 34A36, 34C23, 37G10}

\keywords{nonsmooth vector field, T-singularity, two-fold singularity, codimension.}
\date{}
\dedicatory{} \maketitle

%

\begin{abstract}
We address the problem of understanding the dynamics around typical singular points of $3D$ piecewise smooth vector fields. A model $Z_0$ in $3D$ presenting a  T-singularity is considered and a complete picture of its dynamics is obtained in the following way: \textit{(i)} $Z_0$ has an invariant plane $\pi_0$ filled up with periodic orbits (this means that the restriction $Z_0 |_{\pi_0}$ is a center around the singularity), \textit{(ii)} All trajectories of $Z_0$ converge to the surface $\pi_0$, and such attraction occurs in a very non-usual and amazing way, \textit{(iii)} given an arbitrary integer $k\geq 0$ then $Z_0$ can be approximated by $\pi_0$-invariant piecewise smooth vector fields $Z_{\varepsilon}$ such that  the restriction  $Z_{\varepsilon} |_{\pi_0}$ has exactly $k$-hyperbolic limit cycles, \textit{(iv)} the origin can be chosen as an asymptotic stable equilibrium of $Z_{\varepsilon}$ when $k=0$, and finally, \textit{(v)} $Z_0$ has infinite codimension in the set of all $3D$ piecewise smooth vector fields.
\end{abstract}

\section{Introduction}\label{secao introducao}

Our interest in this paper is to study some qualitative aspects of \textit{piecewise smooth vector fields} (PSVFs for short) in $3D$.  Roughly speaking, as stated in \cite{Tiago-Claudio-Marco-CodimensaoFoldFold},  a PSVF $Z=(X,Y)$  is a pair of $C^r$-vector fields $X$ and $Y$ (both defined on $\R^3$), in such a way that just their restrictions to some regions (half-spaces) separated by a  codimension one surface  $\Sigma$ (called \textit{switching manifold}) are considered. In Section \ref{secao preliminares} we give a precise definition. 

In this context, some of the points with richer dynamics are those ones where the trajectories of $X$ and/or $Y$ are tangent to $\Sigma$. These points are called \textit{tangential singularities}.
The most known tangential singularity of a smooth system $X$ is the \textit{fold singularity} (also called \textit{fold point}), which is characterized by the quadratic contact of an orbit of $X$ with $\Sigma$. %
 A fold point $p$ can be visible or invisible. It is \textit{visible} for $X$ if the $X$-trajectory passing though $p$ remains in the same side where $X$ is defined, otherwise it is \textit{invisible}. In $3D$, generically there exists a curve of tangential singularities $S_X \subset \Sigma$ passing through $p$ (the same for $Y$). 

If $p \in \Sigma$ is a fold singularity of both systems $X$ and $Y$ then it is called a \textit{two-fold singularity}. 
This singularity is a prototypical  model  in the generic  classification of singularities in  PSVFs.  As pointed out in  \cite{Jeffrey-colombo-2011}, a two-fold singularity is an important organizing centre because it brings together all of the basic forms of dynamics possible in a PSVF. There are many distinct  topological types  of two-fold singularities  and the most interesting of them is the so called \textit{T-singularity}.  We say that $p$ is a T-singularity (or \textit{Teixeira-singularity}  $-$ due to the pioneering work \cite{T1} $-$  or \textit{invisible two-fold singularity}) for $Z=(X,Y)$ if $p$ is an invisible fold point of both $X$ and $Y$ and $S_X$ meets $S_Y$ transversally at $p$ (see Figure \ref{fig t sing}). The interested reader can see more details about the T-singularity in \cite{diBernardo-electrical-systems,Jeffrey-T-sing,Jeffrey-colombo,Jeffrey-colombo-2011,Rony,J-T-T1,J-T-T2,T1}.

\begin{figure}[!h]
	\begin{center}\psfrag{M}{$\Sigma$}\psfrag{Sx}{$S_X$}\psfrag{Sy}{$S_Y$} \epsfxsize=6cm
		\epsfbox{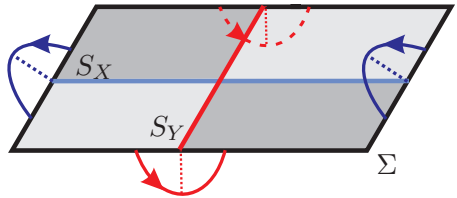} \caption{\small{T-singularity.}}
		\label{fig t sing}
	\end{center}
\end{figure}

Our primary concern in this article is to show the existence of a infinite codimension two-fold singularity. In order to do it,  we study smooth nonlinear perturbations of a specific model of $3D$ degenerate T-singularity  and a complete picture of its amazing dynamics is exhibited. It is worth noting that some interesting bifurcations of such T-singularity  are exhibited.

A formal codimension study of a singularity must contemplate the minimal number of parameters necessary in order to obtain all topological types of dynamical systems around the dynamical system presenting the singularity. In \cite{Tiago-Claudio-Marco-CodimensaoFoldFold} the authors perform this study for a planar two-fold singularity of PSVFs.

Some ideas and constructions presented in \cite{Tiago-Claudio-Marco-CodimensaoFoldFold} and \cite{Jeffrey-colombo}  are adapted in our approach. As consequence, we are able to produce an arbitrary number of topological types of PSVFs in a small neighborhood of a PSVF presenting a T-singularity. So, this T-singularity has infinite codimension.

Regardless of, this work fits into a general program for understanding the dynamics of higher dimension vector fields expressed by:
\begin{equation}\label{eq relay}
\dot{u} = F(u) + sgn(f(u)) K
\end{equation}
where $u=(u_1,u_2, \hdots, u_n) \in \R^n$, $K$ is a constant $n$-dimensional vector, $f: \R^n \rightarrow \R$  is a smooth mapping and $F: \R^n \rightarrow \R^n$, in general, is not smooth in the switching manifold $\Sigma = \{ f^{-1}(0)  \}$.

Systems like \eqref{eq relay} are wide used in applications and appears in models of Mechanics Engineering (see \cite{Brogliato,Dixon,Leine}), Electric Engineering (see \cite{diBernardo-electrical-systems,Kousaka}), Biologic/Control Theory/Economics (with sudden external influences, see \cite{Rossa}), among others. In fact, every system susceptible to \textit{on-off} operations are modelled by systems like \eqref{eq relay} which imposes an interdisciplinary aspect on this theory.

The \textit{sgn function} produces, in general, lost of differentiability  of \eqref{eq relay} when the trajectories pass throught  $\Sigma$. So, \eqref{eq relay} is a  non-smooth (or piecewise smooth) system.  As also happens for smooth systems, some choices on the functions at the right-side of \eqref{eq relay} generate non stable behavior. There are in the current literature a huge variety of papers dealing with stability conditions  of models like \eqref{eq relay} (see \cite{Teixeira-JDE-1977,T1}   among others).

It is worth mentioning that in \cite{Anosov-stability} Anosov proves the asymptotic stability of the origin of \eqref{eq relay} when $F$ is a linear vector field. After, K\"{u}pper-Kunze-Hosham found invariant varieties of \eqref{eq relay} when $K=0$. In fact, they show the existence of  invariant varieties (cones) and prove that the  trajectories of \eqref{eq relay} twist (curl up) until it (see \cite{Kunze-1998,Kupper-2008,Kupper-Hosham-2011}).

In our  approach the following $3D$ system is considered: 
\[ Z_{0}(x,y,z)=(\dot{x},\dot{y},\dot{z})  = F(x,y,z) + sgn(z) K,\]
where $f(x,y,z)=z$, \[F(x,y,z)=\frac{1}{2} \displaystyle\Biggl( \begin{tabular}{ccc}
-1  & -1  & 0 \\ 
-1  & -1  & 0 \\ 
1  &  -1 & 0 \\ 
\end{tabular} \Biggl) \Biggl(\begin{tabular}{c}
$x$ \\ 
$y$ \\ 
$z$ \\ 
\end{tabular} \Biggl) + \frac{1}{2} sgn(z) \Biggl( \begin{tabular}{ccc}
-1  & -1  & 0 \\ 
-1  & -1  & 0 \\ 
-1  &  -1 & 0 \\ 
\end{tabular} \Biggl) \Biggl( \begin{tabular}{c}
$x$ \\ 
$y$ \\ 
$z$ \\ 
\end{tabular} \Biggl)\]and \[K=   \Biggl( \begin{tabular}{c}
-2 \\ 
2 \\ 
0 \\ 
\end{tabular} \Biggl), \]
or equivalently:

\begin{equation}\label{eq Z sem F}
 Z_{0}(x,y,z) = \left\{
      \begin{array}{ll}  X(x,y,z)=(-1-(x+y),1-(x+y),-y)    & \hbox{if $z \geq 0$,} \\
        Y_{0}(x,y,z) = (1,-1, x)                                        & \hbox{if $z \leq 0$.}
      \end{array}    \right.
\end{equation}


%

%

In fact, as exhibited in \cite{Kupper-2008,Kupper-Hosham-2011}, we also identify an invariant manifold in our model. Morevoer, the trajectories converge to it in a very unusual way. The richness of our model is revealed after a suitable perturbation of it. In such case we conserve the invariant manifold and identify the birth of an arbitrary number of limit cycles. Also, we obtain the asymptotic stability of the origin (as  in \cite{Anosov-stability}).


Now we state the main results of the paper.

\vspace{.3cm}

\noindent {\bf Theorem A.}
{\it Let $Z_0$ be given by \eqref{eq Z sem F} and $\pi_0$ the plane $\{ y+x=0 \}$. For each 
integer $k \geq 0$, there exists a one-parameter family of $\pi_{0}$-invariant PSVFs
$Z_{\varepsilon}$ satisfying:}
\begin{itemize}
\item[(a)] $Z_{\varepsilon} \rightarrow Z_0$ when $\varepsilon \rightarrow 0$; 

\item[(b)] $Z_{\varepsilon}$ has exactly $k$ hyperbolic limit cycles in a neighborhood of the origin. The same holds for $k=\infty$
and,

\item[(c)] All trajectories of $Z_0$ and
$Z_{\varepsilon}$ converge to $\pi_0$.

\item[(d)] When $\varepsilon > 0$ and $k=0$, the origin is an asymptotic stable equilibrium point for $Z_{\varepsilon}$.
\end{itemize}

\vspace{.3cm}

An immediate consequence of Theorem A is: 

\vspace{.3cm}

\noindent {\bf Theorem B.}
{\it \label{corolario centro cod infinita}
The PSVF $Z_0$, given by \eqref{eq Z sem F}, has infinite codimension.}

\vspace{.3cm}

%
%
%
%

%
%
%
%
%
%
%

The paper is organized as follows. In Section \ref{secao preliminares} we introduce the terminology, some definitions and the basic theory about PSVFs.
In Section \ref{secao propriedades sistema} we present properties
towards the understanding of the phase portrait of \eqref{eq Z sem
F}. In Section \ref{secao propriedades sistema perturbado} suitable
perturbations of \eqref{eq Z sem F} are considered and the birth of
limit cycles are explicitly exhibited. In Section \ref{secao prova
resultados} we prove the main results. In Section \ref{secao
conclusão} we made a brief conclusion about the results in this
paper and in Section \ref{secao apendice} we picture some numerical
analysis and the phase portrait of  \eqref{eq Z sem F} around the
origin.

\section{Preliminaries}\label{secao preliminares}

In this section we give a brief review of the theory.

Let $V$ be an arbitrarily small neighborhood of $0\in\R^3$. We
consider a codimension one manifold $\Sigma$ of $\R^3$ given by
$\Sigma =f^{-1}(0),$ where $f:V\rightarrow \R$ is a smooth function
having $0\in \R$ as a regular value (i.e. $\nabla f(p)\neq 0$, for
any $p\in f^{-1}({0}))$. We call $\Sigma$ the \textit{switching
manifold} that is the separating boundary of the regions
$\Sigma^+=\{q\in V \, | \, f(q) \geq 0\}$ and $\Sigma^-=\{q \in V \,
| \, f(q)\leq 0\}$. Throughout  paper we assume that $\Sigma
=f^{-1}(0),$ where $f(x,y,z)=z.$

Designate by $\chi$ the space of C$^r$-vector fields on
$V\subset\R^3$ endowed with the C$^r$-topology, with $r \geq 1$
large enough for our purposes. Call $\Omega^r$ the space of vector
fields $Z: V \rightarrow \R ^{3}$ such that
\begin{equation}\label{eq Z}
 Z(x,y,z)=\left\{\begin{array}{l} X(x,y,z),\quad $for$ \quad (x,y,z) \in
\Sigma^+,\\ Y(x,y,z),\quad $for$ \quad (x,y,z) \in \Sigma^-,
\end{array}\right.
\end{equation}
where $X=(X_1,X_2,X_3) , Y = (Y_1,Y_2,Y_3) \in \chi$. We endow
$\Omega^r$ with the product topology. The trajectories of $Z$ are
solutions of  ${\dot q}=Z(q)$ and we will accept it to be
multi-valued in points of $\Sigma$. The basic results of
differential equations, in this context, were stated in \cite{Fi}.

On $\Sigma$ we distinguish the following regions:

$\bullet$ Crossing Region: $\Sigma^c=\{ p \in \Sigma \, | \, X_3(p)\cdot Y_3(p)> 0 \}$.
Moreover, we denote $\Sigma^{c+}= \{ p \in \Sigma \, | \,
X_3(p)>0,Y_3(p)>0 \}$ and $\Sigma^{c-} = \{ p \in \Sigma \, | \,
X_3(p)<0,Y_3(p)<0 \}$.

$\bullet$ Sliding Region: $\Sigma^{s}= \{ p \in \Sigma \, | \, X_3(p)<0,
Y_3(p)>0 \}$.

$\bullet$ Escaping Region: $\Sigma^{e}= \{ p \in \Sigma \, | \, X_3(p)>0
,Y_3(p)<0\}$.

When $q \in \Sigma^s \cup \Sigma^e$, following the Filippov's convention, the \textbf{sliding vector field}
associated to $Z\in \Omega^r$ is the vector field  $\widehat{Z}^s$ tangent to
$\Sigma^s \cup \Sigma^e$ and expressed in coordinates as
\begin{equation}\label{eq campo filippov}\widehat{Z}^s(q)= \frac{1}{(Y_3 - X_3)(q)} ((X_1 Y_3 - Y_1 X_3)(q),(X_2 Y_3 - Y_2 X_3)(q),0).\end{equation}
Associated to \eqref{eq campo filippov} there exists the planar \textbf{normalized sliding vector field}
\begin{equation}\label{equacao campo normalizado}Z^{s}(q)=((X_1 Y_3 - Y_1 X_3)(q),(X_2 Y_3 - Y_2 X_3)(q)).
\end{equation}

\begin{remark}\label{obs campo normalizado topol equivalente}Note that, if
$q\in \Sigma^s$ then $X_3(q)<0$ and $Y_3(q)>0$. So, $(Y_3-X_3)(q)>0$ and therefore,
 $\widehat{Z}^{s}$ and $Z^{s}$ are topologically equivalent in $\Sigma^s$ since they have the same orientation and can be C$^r$-extended to
 the closure $\overline{\Sigma^s}$ of $\Sigma^s$. If $q\in \Sigma^e$ then $\widehat{Z}^{s}$ and $Z^{s}$ have opposite orientation.
\end{remark}

In this context, a rich dynamics occurs on those points  $p \in \Sigma$ such
that $X_3(p)\cdot Y_3(p) =0$, called \textbf{tangential
singularities of $\mathbf{Z}$}  (i.e., the trajectory through $p$ is
tangent to $\Sigma$).



For practical purposes, the contact between the smooth vector field
$X$ and the switching manifold $\Sigma = f^{-1}(0)$ is characterized
by the expression $X.f(p)=\left\langle \nabla f(p),
X(p)\right\rangle = 0$  where $\langle . , . \rangle$ is the usual
inner product in $\R^3$. In this way, we say that a point $p \in
\Sigma$ is a \textit{fold point} of $X$ if $X.f(p)=0$ but
$X^{2}.f(p)\neq0$, where  $X^i.f(p)=\left\langle \nabla X^{i-1}.
f(p), X(p)\right\rangle$ for $i\geq 2$. Moreover, $p\in\Sigma$ is a
\textit{visible} (respectively {\it invisible}) fold point of $X$ if
$X.f(p)=0$ and $X^{2}.f(p)> 0$ (respectively $X^{2}.f(p)< 0$). In
addition, a tangential singularity $q$ is \textit{singular} if $q$
is a invisible tangency for both $X$ and $Y$.  On the other hand, a
tangential singularity $q$ is \textit{regular} if it is not
singular.

 Call
$S_X$ (resp. $S_Y$) the set of all tangential singularities of $X$
(resp. $Y$).  In $3D$, a point of $\Sigma$ at which two curves of
fold singularities $S_X$ and $S_Y$ meet is called a \textit{two-fold
singularity}.
  This singularity is a prototypical  model  in the generic  classification of singularities in  PSVFs.
   As pointed out in  \cite{Jeffrey-colombo-2011}, a two-fold singularity is an important organizing centre because it brings together all of the basic
   forms of dynamics possible in a PSVF. There are many distinct  topological types  of two-fold singularities  and the most interesting of them is the
    so called \textit{T-singularity}.  We say that $p$ is a T-singularity (or \textit{Teixeira-singularity}  or \textit{invisible two-fold singularity}) for $Z=(X,Y)$ if $p$ is an invisible fold point of both $X$ and $Y$ and the intersection of $S_X$ and $S_Y$ is transversal at $p$ (see Figure \ref{fig t sing}).
It is easy to check that in the model \eqref{eq Z sem F} the origin
is a T-singularity. In this article, we study smooth nonlinear
perturbations of this model and a complete picture of its dynamics
is exhibited. The interested reader can see more details about the
T-singularity in
\cite{diBernardo-electrical-systems,Jeffrey-T-sing,Jeffrey-colombo,Jeffrey-colombo-2011,Rony,J-T-T1,J-T-T2,T1}.

It is worth to say that some constructions and ideas of
\cite{Tiago-Claudio-Marco-CodimensaoFoldFold} and
\cite{Jeffrey-colombo} are very useful in our approach.

Now we establish a convention on the trajectories of orbit-solutions
of a PSVF. 

\begin{definition}\label{definicao trajetorias}
The \textbf{local trajectory (orbit)} $\phi_{Z}(t,p)$ of a PSVF
given by \eqref{eq Z} through $p\in V$ is defined as follows:
\begin{itemize}
\item For $p \in \Sigma^+ \backslash \Sigma = \{q\in V \, | \, z >  0\}$ and $p \in \Sigma^{-} \backslash \Sigma = \{q\in V \, | \, z< 0\}$ the trajectory is
given by $\phi_{Z}(t,p)=\phi_{X}(t,p)$ and
$\phi_{Z}(t,p)=\phi_{Y}(t,p)$ respectively.

\item For $p \in \Sigma^{c+}$ and  taking the
origin of time at $p$, the trajectory is defined as
$\phi_{Z}(t,p)=\phi_{Y}(t,p)$ for $t \leq 0$ and
$\phi_{Z}(t,p)=\phi_{X}(t,p)$ for $t \geq 0$. For
the case $p \in \Sigma^{c-}$  the definition is the same
reversing time.

\item For $p \in \Sigma^e$ and  taking the
origin of time at $p$, the trajectory is defined as
$\phi_{Z}(t,p)=\phi_{Z^{\Sigma}}(t,p)$ for $t \leq 0$ and
$\phi_{Z}(t,p)$ is either $\phi_{X}(t,p)$ or $\phi_{Y}(t,p)$ or $\phi_{Z^{\Sigma}}(t,p)$ for $t \geq 0$. For
$p \in \Sigma^s$ the definition is the same
reversing time.

\item For $p$ a regular tangency point and  taking the
origin of time at $p$, the trajectory is defined as
$\phi_{Z}(t,p)=\phi_{1}(t,p)$ for $t \leq 0$ and
$\phi_{Z}(t,p)=\phi_{2}(t,p)$ for $t \geq 0$, where each $\phi_{1},\phi_{2}$ is either $\phi_{X}$ or $\phi_{Y}$ or $\phi_{Z^{\Sigma}}$.

\item For $p$ a singular tangency point
    $\phi_{Z}(t,p)=p$ for all $t \in \R$.
\end{itemize}
\end{definition}

\begin{definition}\label{definicao trajetoria global}
The \textbf{orbit} (\textbf{trajectory}) of a point $p \in V$ is the
set $\gamma (p) = \{ \phi_{Z}(t,p) : t \in \R \}$ obtained by the
concatenation of local trajectories.
\end{definition}

Consider $0 \neq p\in \Sigma^{c+}$. It is easy to see that there
exists a time $t_1(p) > 0$, called $X$-fly time, such that the
forward trajectory of $X$ passing through $p$ at $t=0$ return to
$\Sigma$ after $t_1(p)$. We define the \textit{half return map
associated to $X$} by  $\varphi_{X}(p)=\phi_X(t_1(p),p)=p_1\in
\Sigma$. When $p_1 \in \Sigma^{c-}$, let $t_2(p_1)>0$ be the $Y$-fly
time of the trajectory of $Y$ passing through $p_1$.  Define the
\textit{half return map associated to $Y$} by
$\varphi_{Y}(p_1)=\phi_Y(t_2(p_1),p_1) \in \Sigma$. The $C^r$
involution $\varphi_X$ (resp. $\varphi_Y$) is such that
$Fix(\varphi_X) = S_X$ (resp. $Fix(\varphi_Y) = S_Y$). The {\it
first return map} associated to $Z=(X,Y)$ is defined by the
composition of these involutions, i.e.,
\begin{equation}\label{eq primeiro retorno geral}
\varphi_Z(p)= \varphi_Y \circ \varphi_X (p) = \phi_Y^{}(t_2(p_1),\phi_X^{}(t_1(p),p))
\end{equation}or the reverse, applying first the flow of  $Y$ and after the flow of $X$. See Figure \ref{fig primeiro retorno} and details in \cite{Marco-enciclopedia}.

\begin{figure}[!h]
\begin{center}\psfrag{A}{$p$}\psfrag{B}{$p_1$}\psfrag{C}{$\varphi_Z(p)$} \epsfxsize=6cm
\epsfbox{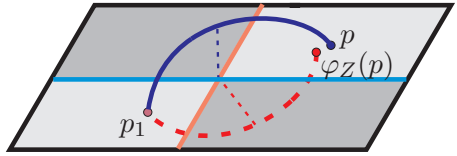} \caption{\small{First Return Map.}}
\label{fig primeiro retorno}
\end{center}
\end{figure}

The mapping  $\varphi_Z$ is an important object in order  to study
the behavior of $Z$ around a T-singularity. The proofs of the main
results require a detailed analysis of the return map and different
domains (departure regions), according to the Filippov's
decomposition of $\Sigma$, must be considered. For example,  if $q
\in \Sigma^{e}$ then the iteration of both mappings $\varphi_Y \circ
\varphi_X (q)$ and $\varphi_X \circ \varphi_Y (q)$ must be
considered.

\section{Properties of System $Z_0$ given by \eqref{eq Z sem F}}\label{secao propriedades sistema}

In this section we describe some important features about the PSVF given by \eqref{eq Z sem F}.
 In Subsection \ref{secao campo deslizante campo inicial} we describe the sliding vector field associated to \eqref{eq Z sem F}.
 In Subsection \ref{secao aplicacao retorno campo inicial} we analyze the first return map associated to it.
 In Subsection \ref{secao convergencia trajetorias campo inicial} we analyze the way in which the trajectories converge to a limit set.
  This last analysis permits us to detect the behavior of certain invariant sets.

\subsection{The sliding vector field associated to \eqref{eq Z sem F}}\label{secao campo deslizante campo inicial}

Using Equation \eqref{equacao campo normalizado}, associated to \eqref{eq Z sem F} we have the normalized sliding vector field
\begin{equation}\label{equacao campo normalizado Z sem F}
Z_{0}^{s}(x,y,z)=((-1-(x+y)) x  + y,(1-(x+y))x -y,0).
\end{equation}

Let us understand the phase portrait of \eqref{equacao campo normalizado Z sem F}.

\begin{proposition}\label{proposicao sing campo normalizado}
The normalized sliding vector field $Z^{s}_{0}$, given by \eqref{equacao campo normalizado Z sem F}, has a saddle-node at the origin.
\end{proposition}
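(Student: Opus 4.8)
The plan is to reduce the statement to a standard center manifold computation for a semi-hyperbolic singularity. First I would write \eqref{equacao campo normalizado Z sem F} as the planar system
\begin{equation*}
\dot x = -x - x^2 - xy + y, \qquad \dot y = x - x^2 - xy - y,
\end{equation*}
and linearize it at the origin. The Jacobian matrix there is
\begin{equation*}
\begin{pmatrix} -1 & 1 \\ 1 & -1 \end{pmatrix},
\end{equation*}
whose trace is $-2$ and whose determinant is $0$; hence its eigenvalues are $0$ and $-2$, so the origin is a semi-hyperbolic singularity and a priori a candidate for a saddle-node, a topological node, or a topological saddle, depending on the reduced dynamics.

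Next I would pass to the eigencoordinates $u = x + y$ and $v = x - y$, associated respectively to the eigenvalues $0$ and $-2$. A short computation gives $\dot u = \dot x + \dot y = -2x(x+y) = -u^2 - uv$ and $\dot v = \dot x - \dot y = -2(x-y) = -2v$, so in these coordinates the system becomes
\begin{equation*}
\dot u = -u^2 - uv, \qquad \dot v = -2v.
\end{equation*}
The line $\{v = 0\}$ is invariant (the equation $\dot v = -2v$ vanishes identically there), hence it is a center manifold of the origin, and the flow restricted to it is governed by $\dot u = -u^2$.

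Finally, the reduced equation on the center manifold has the form $\dot u = a u^2 + O(u^3)$ with $a = -1 \neq 0$, while the transverse eigenvalue $-2$ is nonzero; by the reduction principle for semi-hyperbolic singularities of planar vector fields, the origin is therefore a saddle-node (a parabolic sector together with two hyperbolic sectors), which is exactly the assertion of the Proposition. I do not anticipate any real obstacle here: the only point deserving a line of care is the verification that $\{v=0\}$ is genuinely a center manifold and that the quadratic coefficient of the reduced field is nonzero, and both are immediate because the center manifold is an invariant straight line along which the field equals precisely $-u^2$.
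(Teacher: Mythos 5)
Your proposal is correct and follows essentially the same route as the paper: the same change of variables $(u,v)=(x+y,x-y)$, yielding $\dot u=-(u+v)u=-u^2-uv$, $\dot v=-2v$ with eigenvalues $0$ and $-2$. The only difference is that you finish with an explicit center-manifold/reduction-principle argument (invariant line $\{v=0\}$ with reduced dynamics $\dot u=-u^2$), whereas the paper stops at the eigenvalue computation and appeals to the pictured phase portrait; your completion of that last step is sound.
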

\begin{proof} Identify $\Sigma$ with the $xy$-plane.
Consider the change of variables $(u,v)=(x+y,x-y)$. Then
\eqref{equacao campo normalizado Z sem F} can be re-written in the
form $(\dot{u},\dot{v}) = (-(u+v) u , - 2 v)$. This last system has
the origin as a unique equilibrium with eigenvectors $v_1=(1,0)$,
$v_2 = (0,1)$ associated to the eigenvalues $\lambda_1 = 0$,
$\lambda_2=-2$ respectively.  The phase portrait is pictured at
Figure \ref{fig campo deslizante normalizado}.

\begin{figure}[!h]
\begin{center}\psfrag{X}{$\Sigma$} \epsfxsize=4cm
\epsfbox{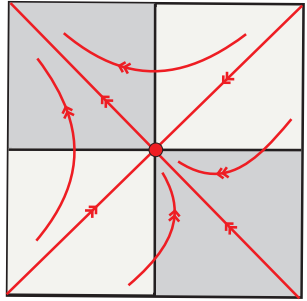} \caption{\small{Phase Portrait of the normalized sliding vector field $Z^{s}_{0}$.}}
\label{fig campo deslizante normalizado}
\end{center}
\end{figure}

\end{proof}

\begin{remark}\label{remark sela hiperbolica}
Since \eqref{equacao campo normalizado Z sem F} has a saddle-node at
the origin, by Remark \ref{obs campo normalizado topol equivalente}
we must reverse the orientation in $\Sigma^e$. So, we conclude that
the sliding vector field associated to $Z_0$ has the phase portrait
shown at Figure \ref{fig campo deslizante sem normalizar}. Note that
the straight line $y+x=0$ in $\Sigma$, where $Y_3 - X_3 =0$ in
\eqref{eq campo filippov}, is composed only by equilibrium points of
the sliding vector field associated to $Z_0$. Moreover, except by
the stable invariant manifold, the trajectories of the sliding
vector field associated to $Z_0$ depart from $\Sigma^e$.
\begin{figure}[!h]
\begin{center}\psfrag{X}{$\Sigma$} \epsfxsize=4cm
\epsfbox{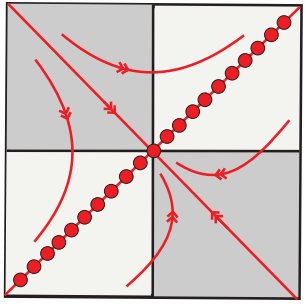} \caption{\small{Phase Portrait of the sliding vector field associated to $Z_0$.}}
\label{fig campo deslizante sem normalizar}
\end{center}
\end{figure}
\end{remark}

\subsection{The first return map associated to \eqref{eq Z sem F}}\label{secao aplicacao retorno campo inicial}

In order to exhibit the first return map associated to
$Z_0=(X,Y_0)$, given by \eqref{eq Z sem F},  we have to write the
expressions of the half return maps $\varphi_X$ and
$\varphi_{Y_{0}}$. A straightforward calculation shows that the
trajectories $\phi_X$ and $\phi_{Y_{0}}$ are parameterized,
respectively, by
\begin{equation}\label{eq trajetorias X}
\phi_{X}(t) = (-t + k_1 e^{-2t} + k_2 , t + k_1 e^{-2t} - k_2 , -t^2/2 + k_1 e^{-2t}/2 + k_2 t + k_3 ),
\end{equation}and
\begin{equation}\label{eq trajetorias Y}
\phi_{Y_{0}}(t) = (t + l_1, -t + l_2, t^2/2 + l_1 t + l_{1}^{2}/2 +  l_3 ),
\end{equation}

Consider the planes $\pi_k = \{ (x,y,z) \in V \, | \, y=-x+k  \}$,
with $k\in \R$.

\begin{proposition}\label{proposicao retorno X} Given an arbitrary point $(x_{0},y_{0},0) \in \Sigma^{c+}$ then \linebreak $\varphi_{X}(x_{0},y_0,0) = (-t_2 + ((x_0+y_0)/2) e^{-2t_{2}} + (x_0 - y_0)/2 , t_2 + ((x_0+y_0)/2) e^{-2t_{2}} - (x_0 - y_0)/2 ,0)$, where the $X$-fly time $t_2>0$ is given implicitly by \begin{equation}\label{eq raiz nao soluvel}
-t_{2}^{2}/2 + ((x_0+y_0)/4) e^{-2t_{2}} + ((x_0 - y_0)/2) t_{2} -(x_0 + y_0)/4=0.
\end{equation}
In particular,  $\phi_{X}(\pi_0 \cap \Sigma^+) \subset (\pi_0 \cap \Sigma^+)$ and $\varphi_{X}(x_{0},-x_0,0) = (-x_0 , x_0 ,0).$
\end{proposition}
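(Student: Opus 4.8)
The plan is to extract everything from the explicit flow \eqref{eq trajetorias X} of $X$ by fixing the integration constants appropriately. First I would impose $\phi_X(0)=(x_0,y_0,0)$ in \eqref{eq trajetorias X}: evaluating the three components at $t=0$ gives $k_1+k_2=x_0$, $k_1-k_2=y_0$ and $k_1/2+k_3=0$, hence
\[
k_1=\frac{x_0+y_0}{2},\qquad k_2=\frac{x_0-y_0}{2},\qquad k_3=-\frac{x_0+y_0}{4}.
\]
Substituting these three values into the $z$-component of \eqref{eq trajetorias X} turns the return condition ``$z$-coordinate $=0$ with $t>0$'' into precisely \eqref{eq raiz nao soluvel}; since this is a transcendental equation, the $X$-fly time $t_2$ --- which, by the definition of the half return map, is the smallest positive root --- can only be described implicitly. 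Feeding $t=t_2$ into the first two components of \eqref{eq trajetorias X} then yields the stated expression for $\varphi_X(x_0,y_0,0)$, because the third component vanishes by construction.

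It remains to show that such a positive $t_2$ actually exists, and this is the only step that is not a bare substitution. Let $g(t)$ denote the $z$-component of $\phi_X(t)$, so $g(0)=0$ and $g'(0)=X_3(x_0,y_0,0)=-y_0$. Because $(x_0,y_0,0)\in\Sigma^{c+}$ we have $X_3>0$ there, hence $g'(0)>0$ and $g(t)>0$ for all small $t>0$; on the other hand the term $-t^2/2$ in $g$ dominates, so $g(t)\to-\infty$ as $t\to+\infty$. By the intermediate value theorem $g$ has a first zero $t_2>0$, and $g>0$ on $(0,t_2)$, so the arc of $\phi_X$ joining $(x_0,y_0,0)$ to $\varphi_X(x_0,y_0,0)$ lies in $\Sigma^+$; this shows $\varphi_X$ is well defined on $\Sigma^{c+}$.

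For the last assertion I would set $x_0+y_0=0$. Then $k_1=0$ and $k_3=0$, so the first two components of \eqref{eq trajetorias X} satisfy $x(t)+y(t)=2k_1e^{-2t}=0$ for every $t$; thus the whole $X$-orbit through a point of $\pi_0$ stays in $\pi_0$, which gives $\phi_X(\pi_0\cap\Sigma^+)\subset\pi_0\cap\Sigma^+$. Moreover, with $k_1=k_3=0$ and $k_2=(x_0-y_0)/2=x_0$, the $z$-component collapses to $g(t)=-t^2/2+x_0t=t\,(x_0-t/2)$, which is positive on $(0,2x_0)$ and whose only positive zero (for $(x_0,-x_0,0)\in\Sigma^{c+}$, i.e. $x_0>0$) is $t_2=2x_0$. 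Substituting $t_2=2x_0$ into $x(t_2)=-t_2+k_2$ and $y(t_2)=t_2-k_2$ gives $\varphi_X(x_0,-x_0,0)=(-x_0,x_0,0)$, as claimed. The main obstacle is really only the existence/minimality of $t_2$ in the general case; once $g$ is written out, everything else is a direct computation.
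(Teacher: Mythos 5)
Your proof is correct and follows essentially the same route as the paper: determine the integration constants $k_1,k_2,k_3$ from the initial condition, read off the implicit equation \eqref{eq raiz nao soluvel} for the fly time from the $z$-component, substitute $t_2$ into the first two components, and solve explicitly on $\pi_0$ to get $t_2=2x_0$. Your extra verification that a first positive zero $t_2$ exists (via $g'(0)=-y_0>0$ on $\Sigma^{c+}$ and $g(t)\to-\infty$) is a welcome detail the paper leaves implicit, but it does not change the approach.
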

\begin{proof} Considering the initial condition $(x_{0},y_{0},0)$ and \eqref{eq trajetorias X}, let $t_2 >0$ be the first time such that \[-t^{2}/2 + ((x_0+y_0)/4) e^{-2 t} + ((x_0 - y_0)/2) t -(x_0 + y_0)/4=0.\]
Using \eqref{eq trajetorias X} it is easy to see that
\[\varphi_{X}(x_{0},y_0,0) =\] \[(-t_2 + ((x_0+y_0)/2) e^{-2 t_{2}} + (x_0
- y_0)/2 ,  t_2 + ((x_0+y_0)/2) e^{-2t_{2}} - (x_0 - y_0)/2 ,0).\]
In particular, for $p_0=(x_{0},y_{0},0) = (x_0, -x_0,0) \in \pi_0
\cap \Sigma$, we obtain that the first two coordinates of
$\phi_{X}(t,p_0)$ are $x(t) = -t  +  x_0 $ and $y(t) = t  -  x_0 =
-x(t)$. Moreover, in this case we can solve explicitly the equation
$-t_{2}^{2}/2 + ((x_0+y_0)/2) e^{-2t_{2}}/2 + ((x_0 - y_0)/2) t_{2}
-(x_0 + y_0)/4=0$ and we obtain $t_2=2 x_0$. So,
 $\varphi_{X}(x_{0},-x_0,0) = (-x_0 , x_0 ,0)$. This concludes the proof.
\end{proof}

\begin{proposition}\label{proposicao retorno Y} Given an arbitrary point $(x_{0},y_{0},0) \in \Sigma^{c-}$ then \linebreak $\varphi_{Y_{0}}(x_{0},y_0,0) = (-x_0,  y_0 + 2 x_0,0)$. In particular,  $\phi_{Y_{0}}(\pi_k \cap \Sigma^-) \subset (\pi_k \cap \Sigma^-)$.
\end{proposition}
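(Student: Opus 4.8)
The plan is to use the explicit parameterization \eqref{eq trajetorias Y} of the flow of $Y_0$, fix the integration constants from the initial condition, and then solve for the return time in closed form --- which, in contrast with Proposition \ref{proposicao retorno X}, is possible here because the third component of $Y_0$ is a polynomial (not a transcendental) function of $t$.

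First I would impose $\phi_{Y_0}(0)=(x_0,y_0,0)$ in \eqref{eq trajetorias Y}. The first two coordinates give $l_1=x_0$ and $l_2=y_0$; the third gives $l_1^2/2+l_3=0$, i.e. $l_3=-x_0^2/2$. Substituting back, the third coordinate of the orbit becomes
\[
z(t)=\frac{t^2}{2}+x_0 t=\frac{t}{2}\,(t+2x_0),
\]
whose only zero other than $t=0$ is $t=-2x_0$. Since the third component of $Y_0$ is $x$, the hypothesis $(x_0,y_0,0)\in\Sigma^{c-}$ forces $x_0<0$, so $t_2:=-2x_0>0$ is a genuine positive fly time (and $z(t)\le 0$ on $0\le t\le t_2$, so the orbit arc indeed lies in $\Sigma^-$). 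Evaluating \eqref{eq trajetorias Y} at $t=t_2$ with the above constants yields
\[
\varphi_{Y_0}(x_0,y_0,0)=\phi_{Y_0}(-2x_0)=(-x_0,\;y_0+2x_0,\;0),
\]
which is the asserted formula.

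For the invariance statement $\phi_{Y_0}(\pi_k\cap\Sigma^-)\subset\pi_k\cap\Sigma^-$, write $y_0=-x_0+k$ for a point of $\pi_k\cap\Sigma^-$. Along the orbit the first two coordinates are $x(t)=t+x_0$ and $y(t)=-t+y_0=-(t+x_0)+k=-x(t)+k$, so the whole orbit arc lies on the plane $\pi_k$; combined with $z(t)=\frac{t}{2}(t+2x_0)\le 0$ for $0\le t\le -2x_0$, it stays in $\pi_k\cap\Sigma^-$.

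I do not anticipate a genuine obstacle here: the computation is direct, and the only point requiring attention is the sign bookkeeping that makes $t_2$ a positive time, which is exactly where the crossing-region hypothesis $(x_0,y_0,0)\in\Sigma^{c-}$ enters.
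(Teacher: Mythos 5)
Your proof is correct and follows essentially the same route as the paper's: fix the integration constants in the explicit parameterization \eqref{eq trajetorias Y}, solve the quadratic $t^2/2+x_0t=0$ to get the fly time $t_1=-2x_0$, evaluate to obtain $(-x_0,y_0+2x_0,0)$, and check invariance of $\pi_k$ via $y(t)=-x(t)+k$. The only difference is that you spell out the sign bookkeeping (that $x_0<0$ on $\Sigma^{c-}$ makes the fly time positive and keeps the arc in $\Sigma^-$), which the paper leaves implicit.
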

\begin{proof}Considering the initial condition $(x_{0},y_{0},0)$ and \eqref{eq trajetorias Y}, in order to determine $\varphi_{Y_{0}}$ it is enough to obtain the first time  $t_1 >0$ such that $t_{1}^{2}/2 + x_0 t_1 =0$. So $t_1 = -2 x_0$ and $\varphi_{Y_{0}}(x_{0},y_0,0) = (-x_0,  y_0 + 2 x_0,0)$. In particular, for $p_0=(x_{0},y_{0},0) = (x_0, -x_0 + k,0) \in \pi_k \cap \Sigma^{-}$, the first two coordinates of $\phi_{Y}(t,p_0)$ are $x(t)=t + x_0$ and $y(t)=-t -x_0 + k=-(t + x_0)+k=-x(t)+k$. This concludes the proof.
\end{proof}

\begin{proposition}\label{proposicao plano pi zero invariante}
The plane $\pi_0 = \{ (x,y,z) \in V \, | \, x+y=0 \}$ is
$Z_0$-invariant. Moreover, $Z_{0}|_{\pi_{0}}$ is a  center.
\end{proposition}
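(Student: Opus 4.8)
The plan is to establish the two assertions separately: first that $\pi_0$ is invariant under $Z_0$, and then that the restricted dynamics on $\pi_0$ is a center around the origin. For the invariance, I would argue side by side. By Proposition \ref{proposicao retorno X} we already know $\phi_X(\pi_0\cap\Sigma^+)\subset\pi_0\cap\Sigma^+$, and by Proposition \ref{proposicao retorno Y} (with $k=0$) we know $\phi_{Y_0}(\pi_0\cap\Sigma^-)\subset\pi_0\cap\Sigma^-$. What remains is to check that the vector fields $X$ and $Y_0$ are actually tangent to the plane $\{x+y=0\}$ as vector fields in $\R^3$, not merely that their $\Sigma$-return maps preserve the line $\pi_0\cap\Sigma$. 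For $X=(-1-(x+y),\,1-(x+y),\,-y)$ one computes $\frac{d}{dt}(x+y) = \dot x+\dot y = -2(x+y)$, which vanishes on $\{x+y=0\}$; for $Y_0=(1,-1,x)$ one gets $\dot x+\dot y = 0$ identically. Hence the function $g(x,y,z)=x+y$ satisfies $X\cdot g = -2g$ and $Y_0\cdot g = 0$, so $\{g=0\}$ is invariant for both smooth fields, and therefore the whole plane $\pi_0$ (including its intersection with $\Sigma$, where crossing/sliding dynamics take place) is $Z_0$-invariant. I should also remark that $\pi_0\cap\Sigma$ is exactly the line $y+x=0$ of equilibria of the sliding field noted in Remark \ref{remark sela hiperbolica}, so no trajectory leaves $\pi_0$ through a sliding segment.

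For the second assertion, I would reduce $Z_0|_{\pi_0}$ to a one-dimensional picture by parametrizing $\pi_0$ with coordinates $(x,z)$ (using $y=-x$). On $\pi_0\cap\Sigma^+$ the field $X$ becomes $\dot x=-1$, $\dot z=-y=x$, i.e. the planar system $(\dot x,\dot z)=(-1,x)$; on $\pi_0\cap\Sigma^-$ the field $Y_0$ becomes $(\dot x,\dot z)=(1,x)$. Each of these is a smooth planar vector field whose orbits are parabolas: for $X$, $z = -x^2/2 + c$ (traversed with $x$ decreasing), and for $Y_0$, $z = x^2/2 + c$ (traversed with $x$ increasing). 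The switching line here is $\{z=0\}$ inside $\pi_0$, and the origin is an invisible fold for both pieces — indeed this is precisely the planar invisible two-fold (the planar analogue of the T-singularity). The key computation is the first return map on, say, the half-line $\{z=0,\,x>0\}$: following $X$ from $(x_0,0)$ with $x_0>0$ we land at $(-x_0,0)$ (this is the explicit statement $\varphi_X(x_0,-x_0,0)=(-x_0,x_0,0)$ from Proposition \ref{proposicao retorno X}), and following $Y_0$ from $(-x_0,0)$ we return to $(x_0,0)$ (the $k=0$ case of Proposition \ref{proposicao retorno Y}). So the full return map is the identity, every orbit through a crossing point of $\pi_0\cap\Sigma$ is closed, and the origin is surrounded by a family of periodic orbits: $Z_0|_{\pi_0}$ is a center.

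The main obstacle I anticipate is not any single computation — all the relevant half-return maps are already recorded in Propositions \ref{proposicao retorno X} and \ref{proposicao retorno Y} — but rather being careful about what ``center'' means for a Filippov system and checking the orientation/crossing structure on $\pi_0\cap\Sigma$. One must verify that near the origin the segment of $\pi_0\cap\Sigma$ with $x>0$ lies in $\Sigma^{c}$ (so that concatenating the $X$-arc and the $Y_0$-arc genuinely produces a crossing periodic orbit, rather than running into a sliding region), and symmetrically for $x<0$; this follows from the signs of $X_3=-y=x$ and $Y_0{}_3=x$ along $\pi_0\cap\Sigma$, which have the same sign, placing that part of the line in the crossing region. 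Once the crossing structure is confirmed, the closed-orbit conclusion is immediate from the identity return map, and combined with the invariance established above this proves the proposition.
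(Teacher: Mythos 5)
Your proposal is correct and follows essentially the same route as the paper: invariance of $\pi_0$ via Propositions \ref{proposicao retorno X} and \ref{proposicao retorno Y}, and the center from the fact that $\varphi_{Y_0}\circ\varphi_X$ is the identity on $r_0=\pi_0\cap\Sigma$. Your additional checks (that $X\cdot(x+y)=-2(x+y)$ and $Y_0\cdot(x+y)=0$, and that $r_0\setminus\{0\}$ lies in the crossing region so the concatenated parabolic arcs genuinely close up) are details the paper leaves implicit, and they strengthen rather than change the argument.
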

\begin{proof}
By Propositions \ref{proposicao retorno X} and \ref{proposicao retorno Y}  we get that $\pi_0$ is invariant by the flow of  $Z_0$. In order to see that $Z_0$ has a center at $\pi_0$ it is enough to see that $$\varphi_{Z_{0}}(x_0,-x_0,0)=\varphi_{Y_{0}}\circ\varphi_{X}(x_0,-x_0,0)=\varphi_{Y_{0}}(-x_0 , x_0 ,0)=(x_0,-x_0,0).$$
\end{proof}


Now we will prove that $\pi_0$ is a hyperbolic global attractor for the trajectories of $Z_{0}$.

\begin{proposition}\label{proposicao convergencia para pi zero}
Let $r_0$ be the straight line given by $r_0= \pi_0 \cap \Sigma$. Given $(x_{0},y_0,0) \in \Sigma^{c}$ then $$d(\varphi_{Z_{0}}(x_{0},y_0,0),r_0) < d((x_{0},y_0,0),r_0).$$This means that the trajectories of $Z_0$ are converging to $r_0$. Moreover, $$\varphi_{Z_{0}}^{n}(x_{0},y_0,0)=(-x_n, x_n + (x_0 + y_0) e^{-2 (t_{2}^{(1)}+ \hdots + t_{2}^{(n)})},0)$$where $t_{2}^{(i)}$ is the fly time necessary to the $X$-trajectory by $\varphi_{Z_{0}}^{i}(x_0,y_0,0)$ returns to $\Sigma$ and $x_n = - t_{2}^{(n)} + ((x_0+y_0)/2) e^{-2 (t_{2}^{(1)}+ \hdots + t_{2}^{(n)})} - x_{n-1} - (x_0 + y_0) (e^{-2 (t_{2}^{(1)}+ \hdots + t_{2}^{(n-1)})})/2$.
\end{proposition}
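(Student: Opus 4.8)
The plan is to reduce everything to tracking a single scalar, the coordinate sum $x+y$, which (up to the constant factor $1/\sqrt 2$) is exactly the distance of a point $(x,y,0)\in\Sigma$ to the line $r_0=\{x+y=0,\ z=0\}$; indeed $d((x,y,0),r_0)=|x+y|/\sqrt 2$. So the whole statement becomes a question about how the two half-return maps act on $x+y$.

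First I would use Proposition \ref{proposicao retorno X}: for a departure point $(x_0,y_0,0)\in\Sigma^{c+}$ the first two coordinates of $\varphi_X(x_0,y_0,0)$ add up to $(x_0+y_0)e^{-2t_2}$, with $X$-fly time $t_2>0$, so $\varphi_X$ multiplies $x+y$ by the factor $e^{-2t_2}\in(0,1)$. Next, Proposition \ref{proposicao retorno Y} gives $\varphi_{Y_0}(x,y,0)=(-x,\,y+2x,\,0)$, whose coordinate sum is again $x+y$, so $\varphi_{Y_0}$ leaves $x+y$ invariant. Composing (applying $\varphi_X$ first for points of $\Sigma^{c+}$ and $\varphi_{Y_0}$ first for points of $\Sigma^{c-}$) yields $d(\varphi_{Z_0}(x_0,y_0,0),r_0)=|x_0+y_0|\,e^{-2t_2}/\sqrt 2<d((x_0,y_0,0),r_0)$ whenever $x_0+y_0\neq 0$; if $x_0+y_0=0$ the point already lies on $r_0$ and is fixed, by Proposition \ref{proposicao plano pi zero invariante}. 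This proves the strict inequality and the convergence assertion.

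The explicit formula for $\varphi_{Z_0}^n$ I would then get by induction on $n$, writing $S_n=t_2^{(1)}+\cdots+t_2^{(n)}$ and $\varphi_{Z_0}^{n-1}(x_0,y_0,0)=(a_{n-1},b_{n-1},0)$. The step above already gives $a_n+b_n=(x_0+y_0)e^{-2S_n}$. Assuming inductively that $\varphi_{Z_0}^{n-1}(x_0,y_0,0)=(-x_{n-1},\,x_{n-1}+(x_0+y_0)e^{-2S_{n-1}},\,0)$, I substitute this point into Proposition \ref{proposicao retorno X} with fly time $t_2^{(n)}$; simplifying the resulting first coordinate with $a_{n-1}+b_{n-1}=(x_0+y_0)e^{-2S_{n-1}}$ and $a_{n-1}=-x_{n-1}$ produces precisely the stated recursion
\[
x_n=-t_2^{(n)}+\tfrac{x_0+y_0}{2}e^{-2S_n}-x_{n-1}-\tfrac{x_0+y_0}{2}e^{-2S_{n-1}}.
\]
Applying $\varphi_{Y_0}$, which sends $(c,d,0)\mapsto(-c,\,d+2c,\,0)$, and using $a_n+b_n=(x_0+y_0)e^{-2S_n}$, then gives $\varphi_{Z_0}^n(x_0,y_0,0)=(-x_n,\,x_n+(x_0+y_0)e^{-2S_n},\,0)$, closing the induction.

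The algebra here is routine; the part that needs care is the bookkeeping of domains. One must verify that every iterate $\varphi_{Z_0}^i(x_0,y_0,0)$ is in fact defined — i.e. that $\varphi_X$ sends its argument into $\Sigma^{c-}$ so that $\varphi_{Y_0}$ applies, and $\varphi_{Y_0}$ sends its argument into $\Sigma^{c+}$ — and that each fly time $t_2^{(i)}$ exists, is positive and finite (existence being implicit in Proposition \ref{proposicao retorno X}). Controlling the crossing character of the successive return points on $\Sigma$ along the orbit is the only genuinely delicate point of the proof.
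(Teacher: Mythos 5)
Your proposal is correct and follows essentially the same route as the paper: both rest on Propositions \ref{proposicao retorno X} and \ref{proposicao retorno Y}, observing that $\varphi_X$ scales the quantity $x+y$ (hence the distance to $r_0$) by $e^{-2t_2}$ while $\varphi_{Y_0}$ preserves it, and both obtain the iterated formula by applying these two propositions $n$ times (your induction just makes that step explicit). Your closing remark about checking that successive iterates stay in the crossing region is a fair point of care, but the paper's own proof does not address it either, so it does not mark a divergence of method.
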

\begin{proof}By Proposition \ref{proposicao retorno X} we obtain
\[\varphi_{X}(x_{0},y_0,0) = \]\[(-t_2 + ((x_0+y_0)/2) e^{-2 t_{2}} + (x_0
- y_0)/2 , t_2 + ((x_0+y_0)/2) e^{-2 t_{2}} - (x_0 - y_0)/2
,0),\]where $t_2>0$ is given implicitly by \[-t_{2}^{2}/2 +
((x_0+y_0)/2) e^{-2 t_{2}}/2 + ((x_0 - y_0)/2) t_{2} -(x_0 +
y_0)/4=0.\]

By Proposition \ref{proposicao retorno Y},
\begin{equation}\label{equacao primeiro retorno}
\begin{array}{cccc}
  \varphi_{Z_{0}}(x_{0},y_0,0)  =  (\varphi_{Y} \circ \varphi_{X})(x_{0},y_0,0) = \\
    =  \Big(t_2 - (\frac{(x_0+y_0)}{2} e^{-2t_{2}} - \frac{(x_0 - y_0)}{2},-t_2 + 3(\frac{(x_0+y_0)}{2} e^{-2t_{2}} + \frac{(x_0 - y_0)}{2},0\Big).&
\end{array}
\end{equation}

Then we get,
$$
d(\varphi_{Z_{0}}(x_{0},y_0,0),r_0) = \frac{\sqrt{2}}{2} (x_0+y_0) e^{-2 t_{2}}  < \frac{\sqrt{2}}{2} (x_0+y_0) = d((x_{0},y_0,0),r_0).
$$

In order to obtain that \[\varphi_{Z_{0}}^{n}(x_{0},y_0,0)=(-x_n,
x_n + (x_0 + y_0) e^{-2 (t_{2}^{(1)}+ \hdots + t_{2}^{(n)})},0),\]
with \[x_n = - t_{2}^{(n)} + \frac{(x_0+y_0)}{2} e^{-2 (t_{2}^{(1)}+
\hdots + t_{2}^{(n)})} - x_{n-1} - \frac{(x_0 + y_0) (e^{-2
(t_{2}^{(1)}+ \hdots + t_{2}^{(n-1)})})}{2},\] it is enough to use
$n$ times Propositions \ref{proposicao retorno X} and
\ref{proposicao retorno Y}.
\end{proof}

%


\begin{proposition}\label{proposicao autovalores primeiro retorno}
Let $p_0=(x_0,y_0,0) \in \pi_0$. The first return map $\varphi_{Z}(p_0)$ has eigenvalues $\mu_1=1$ and $\mu_2= 1- 4 x_0 + 8 x_{0}^{2} + O(x_{0}^{2})$ and eigenvectors $u_1=(-1,1)$ and $u_2=(-(1-2 x_0 + 2 x_0)/(1 - 4 x_0 + 6 x_{0}^{2}) + O(x_{0}^{2}),1)$, respectively.
\end{proposition}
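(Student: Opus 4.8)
The plan is to compute the first return map $\varphi_{Z_0}$ restricted to the switching manifold near a base point $p_0=(x_0,y_0,0)\in\pi_0$ up to second order in the departure point and then linearize. Concretely, I would start from the closed-form half-return maps: $\varphi_{Y_0}(x,y,0)=(-x,y+2x,0)$ is already linear (Proposition \ref{proposicao retorno Y}), so the only work is in $\varphi_X$. For $\varphi_X$ I would use Proposition \ref{proposicao retorno X}, treating the $X$-fly time $t_2$ as an implicit function of the initial condition through Equation \eqref{eq raiz nao soluvel}. Since $\pi_0$ is $Z_0$-invariant (Proposition \ref{proposicao plano pi zero invariante}) and on $\pi_0$ we have the explicit solution $t_2=2x_0$, I would expand $t_2$ in a Taylor series around the $\pi_0$-point: write the initial point as $(x_0+\xi,\,-x_0+\eta,\,0)$, set $s:=\xi+\eta$ (the ``$x+y$'' coordinate, which measures the distance to $\pi_0$) and $d:=\xi-\eta$, and solve \eqref{eq raiz nao soluvel} for $t_2=2x_0+a\,s+b\,d+O(2)$ by implicit differentiation. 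Substituting back into the formula for $\varphi_X$ and composing with the linear map $\varphi_{Y_0}$ yields $\varphi_{Z_0}$ as a map of $(\xi,\eta)$; its Jacobian at $\xi=\eta=0$ is the matrix whose eigenvalues and eigenvectors are claimed.

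The key organizing observation is that $u_1=(-1,1)$ spans the direction along $r_0=\pi_0\cap\Sigma$, and because $\pi_0$ is filled with a continuum of fixed points of $\varphi_{Z_0}$ (the center, Proposition \ref{proposicao plano pi zero invariante}), differentiating the fixed-point relation along $r_0$ immediately gives that $u_1$ is an eigenvector with eigenvalue $\mu_1=1$; this requires no computation beyond invoking Proposition \ref{proposicao plano pi zero invariante}. Hence only the second eigenvalue $\mu_2$ and the transverse eigenvector $u_2$ need genuine work. For $\mu_2$ I would note, consistent with Proposition \ref{proposicao convergencia para pi zero}, that the $(x+y)$-component transforms under $\varphi_{Z_0}$ essentially by multiplication by $e^{-2t_2}$; expanding $e^{-2t_2}=e^{-4x_0}(1+O(\xi,\eta))$ and keeping the relevant order gives $\mu_2=e^{-4x_0}+(\text{corrections})=1-4x_0+8x_0^2+O(x_0^3)$ after re-expansion, matching the stated formula (reading the $O(x_0^2)$ in the paper as the appropriate higher-order term). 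The eigenvector $u_2$ is then obtained by solving $(J-\mu_2 I)u_2=0$ with the computed Jacobian entries and normalizing the second coordinate to $1$.

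The steps, in order: (1) parametrize the departure point off $\pi_0$ and recall the linear form of $\varphi_{Y_0}$; (2) implicitly expand the $X$-fly time $t_2$ to first order in the off-$\pi_0$ displacement using \eqref{eq raiz nao soluvel}, anchored at $t_2=2x_0$; (3) substitute into Proposition \ref{proposicao retorno X} to get $\varphi_X$ to the needed order, then compose to get $\varphi_{Z_0}$ and form its Jacobian at the $\pi_0$-point; (4) read off $\mu_1=1$, $u_1=(-1,1)$ from the center property; (5) extract $\mu_2$ from the contraction factor $e^{-2t_2}$ and re-expand in $x_0$; (6) solve the eigenvector equation for $u_2$. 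The main obstacle I anticipate is step (2)–(3): the implicit differentiation of the transcendental fly-time equation and the careful bookkeeping of which terms survive to the order quoted in the statement. Everything downstream is linear algebra on a $2\times2$ matrix, but getting the $O(x_0^2)$ coefficients right in $\mu_2$ and the first-order coefficient in the first component of $u_2$ demands disciplined Taylor expansion — and one has to be a little careful about what the author's $O(x_0^2)$ notation is meant to collect, since naively it would be absorbed into the displayed lower-order terms.
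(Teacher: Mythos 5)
Your plan is sound and would establish the proposition, but it is not quite the paper's route. The paper, unable to solve the transcendental fly-time equation \eqref{eq raiz nao soluvel}, simply replaces $e^{-2t}$ by its second-order Taylor polynomial, solves the resulting polynomial equation explicitly for $t_2$ as a function of the departure point, substitutes into the return map and computes the $2\times 2$ Jacobian by brute force; the $O(x_0^2)$-type remainders in the statement are exactly the price of that truncation. You instead keep the exact equation and expand $t_2$ by implicit differentiation anchored at the exact on-$\pi_0$ value $t_2=2x_0$ (note the implicit-function hypothesis: the $t$-derivative of the left side of \eqref{eq raiz nao soluvel} at $t=2x_0$ with $x_0+y_0=0$ equals $-x_0$, so you should exclude the degenerate point $x_0=0$), and you add two structural shortcuts the paper does not use: differentiating the identity $\varphi_{Z_0}(x,-x,0)=(x,-x,0)$ along the line of fixed points $r_0$ gives $\mu_1=1$ and $u_1=(-1,1)$ with no computation, and in the coordinates $u=x+y$, $v=x-y$ the first component of the return map is exactly $u\mapsto u\,e^{-2t_2(u,v)}$, so at $u=0$ the row $\bigl(\partial u'/\partial u,\ \partial u'/\partial v\bigr)$ is $\bigl(e^{-4x_0},0\bigr)$ and the Jacobian is triangular; hence the transverse eigenvalue is $e^{-4x_0}$ exactly (your ``corrections'' vanish at $u=0$), whose expansion $1-4x_0+8x_0^2+\cdots$ is the paper's formula and incidentally clarifies what the author's loosely written remainder should mean. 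Your approach buys an exact eigenvalue, a conceptual explanation of both eigendirections, and lighter error bookkeeping; the paper's buys a fully explicit (if approximate) fly time with no implicit-function argument. Solving $(J-\mu_2 I)u_2=0$ for the transverse eigenvector, as you propose, then reproduces the paper's $u_2$ to the stated order.
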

\begin{proof}
Since we can not solve Equation \eqref{eq raiz nao soluvel} we consider the second order series expansion of $e^{-2t}$. So we are able to solve \eqref{eq raiz nao soluvel} and obtain the $X$-fly time of $p_0$. Moreover, a straightforward calculation shows that the diffeomorphism $\varphi_{Z}(p_0)$ has eigenvalues  $\mu_1=1$ and $\mu_2= 1- 4 x_0 + 8 x_{0}^{2}$ and eigenvectors $u_1=(-1,1)$ and $u_2=(-(1-2 x_0 + 2 x_0)/(1 - 4 x_0 + 6 x_{0}^{2}),1)$, respectively.
\end{proof}

\begin{proposition}\label{proposicao pi zero eh atrator para Z rho}
All trajectories of $Z_0$, given by \eqref{eq Z sem F}, converge to the plane $\pi_0$.
\end{proposition}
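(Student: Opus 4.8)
The goal is to upgrade the conclusion of Proposition \ref{proposicao convergencia para pi zero} — which only controls points starting in the crossing region $\Sigma^c$ — to a statement about \emph{all} trajectories of $Z_0$, including those that start off $\Sigma$, those that hit the sliding or escaping regions, and those that pass through tangential points. The strategy is to reduce every case to the behavior of the first return map $\varphi_{Z_0}$ on $\Sigma$, whose distance-contraction to $r_0 = \pi_0 \cap \Sigma$ was established in Proposition \ref{proposicao convergencia para pi zero}, together with the explicit formula \eqref{equacao primeiro retorno}.

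First I would observe that since $\pi_0 = \{x+y=0\}$ is $Z_0$-invariant (Proposition \ref{proposicao plano pi zero invariante}) and both $\phi_X$, $\phi_{Y_0}$ translate the quantity $x+y$ in a controlled way, it suffices to show that along any trajectory the value of $|x+y|$ tends to $0$; equivalently, that the distance to $\pi_0$ of the successive crossing points on $\Sigma$ tends to $0$. A trajectory starting at an interior point of $\Sigma^+$ (resp. $\Sigma^-$) flows under $X$ (resp. $Y_0$) until it meets $\Sigma$; using \eqref{eq trajetorias X}--\eqref{eq trajetorias Y} one checks the first hitting point lies in $\overline{\Sigma^c}$ (for generic initial data in $\Sigma^{c+}\cup\Sigma^{c-}$), so after an initial arc the orbit is governed by iterates of $\varphi_{Z_0}$. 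Then I would invoke Proposition \ref{proposicao convergencia para pi zero}: from \eqref{equacao primeiro retorno}, the distance to $r_0$ after one return is $\tfrac{\sqrt2}{2}(x_0+y_0)e^{-2t_2}$, and since each fly time $t_2^{(i)}>0$ is bounded below on a neighborhood of the origin away from $r_0$ (the $X$-fly time degenerates to $0$ only as one approaches $r_0$), the product $e^{-2(t_2^{(1)}+\cdots+t_2^{(n)})}\to 0$, whence $d(\varphi_{Z_0}^n(x_0,y_0,0),r_0)\to 0$. Interpolating the continuous flow between consecutive crossings (the fly times stay bounded, and along each arc $x+y$ stays comparable to its endpoint values) gives convergence of the full trajectory to $\pi_0$.

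It remains to handle the trajectories that reach $\Sigma^s\cup\Sigma^e$ or a tangency. For points on the sliding/escaping region I would use the description in Remark \ref{remark sela hiperbolica}: the sliding vector field associated to $Z_0$ has the whole line $r_0$ as equilibria and, apart from the stable manifold, trajectories leave $\Sigma^e$ and re-enter the region where the return map applies; on $r_0\subset\pi_0$ there is nothing to prove, and on the stable manifold the sliding trajectory converges to an equilibrium in $r_0\subset\pi_0$. For tangential (fold) points the local trajectory either is a single point (if singular, i.e.\ at the T-singularity itself, which lies in $\pi_0$) or concatenates arcs of $X$, $Y_0$, $Z^\Sigma$ each of which has already been treated. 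Assembling these cases via Definition \ref{definicao trajetoria global} yields the claim.

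The main obstacle I expect is the uniform lower bound on the fly times $t_2^{(i)}$ needed to conclude $\sum t_2^{(i)}=\infty$: a priori the return points could march toward $r_0$ while the corresponding fly times shrink fast enough that the series converges, leaving a nonzero residual distance. I would resolve this by analyzing \eqref{eq raiz nao soluvel} near $r_0$ — writing $s=(x_0+y_0)/2$ small and solving for $t_2$ — to show $t_2$ stays bounded below by a positive constant (indeed $t_2\to t_*>0$ as $s\to 0$, as the explicit case $t_2=2x_0$ on $\pi_0$ already suggests is not what happens for the first two coordinates; rather the relevant fly time has a finite nonzero limit), so that $t_2^{(1)}+\cdots+t_2^{(n)}\gtrsim cn\to\infty$ and the geometric-type decay $e^{-2cn}\to0$ forces convergence. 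The remaining bookkeeping — that interpolating arcs do not spoil the limit and that the sliding and tangential cases glue correctly — is routine given the explicit parametrizations.
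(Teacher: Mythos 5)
Your overall architecture is the same as the paper's: reduce everything to the behavior on $\Sigma$, use the contraction of Proposition \ref{proposicao convergencia para pi zero} for crossing orbits, Remark \ref{remark sela hiperbolica} for the sliding/escaping regions, and the definition of trajectories for tangential points; the paper's own proof is exactly this assembly (stated tersely, with the detailed case analysis carried out in Subsection \ref{secao convergencia trajetorias campo inicial}). However, the step you yourself single out as the main obstacle is resolved incorrectly, and this is a genuine gap. Your two statements about the fly time contradict each other, and the one you rely on is false where it is needed: the $X$-fly time does not degenerate ``as one approaches $r_0$'' --- on $r_0$ itself it equals $2x_0>0$; it degenerates as the return point approaches the tangency line $S_X=\{y=0\}$ or the origin. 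Consequently the claimed uniform bound $t_2^{(i)}\geq c>0$ along an orbit is not available in general: on the side $V^+=\{x+y>0\}$ the successive returns drift precisely toward $S_X$ and, after finitely many iterates, land in the sliding region $\Sigma^s$ (item (2.ii) of Subsection \ref{secao convergencia trajetorias campo inicial}), so there is no infinite product $e^{-2(t_2^{(1)}+\cdots+t_2^{(n)})}$ to control there at all; convergence to $\pi_0$ on that side comes from the sliding dynamics, whose orbits in $\Sigma^s$ tend to the origin (Proposition \ref{proposicao sing campo normalizado}, Remark \ref{remark sela hiperbolica}), not only ``on the stable manifold'' as your sliding paragraph suggests.

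On the other side $V^-$, where the crossing returns do continue indefinitely, your mechanism is the right one, but the computation you propose (solving \eqref{eq raiz nao soluvel} as $s=(x_0+y_0)/2\to 0$ at a fixed base point) only gives a pointwise limit $t_*=t_*(x_0)>0$, which shrinks as the base point approaches the origin; to conclude $\sum_i t_2^{(i)}=\infty$ you must additionally show that the iterated returns stay away from $S_X\cup S_Y$ and from the origin --- in fact they escape to infinity (item (3.i) of Subsection \ref{secao convergencia trajetorias campo inicial}), so the fly times are eventually bounded below. A cleaner way to organize your own first reduction: along the flow of $X$ one has $\frac{d}{dt}(x+y)=-2(x+y)$, while $Y_0$ preserves $x+y$, so $|x+y|$ is non-increasing along any orbit and decreases by the factor $e^{-2T}$, where $T$ is the total time spent in $\Sigma^+$; the proof then reduces to the dichotomy that either $T=\infty$ along the orbit (the $V^-$ case) or the orbit is absorbed by $\Sigma^s$ in finite time and slides to the origin (the $V^+$ case). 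As written, your proposal establishes neither branch of this dichotomy, so the argument is incomplete.
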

\begin{proof} First note that each $q \in V$ hits $\Sigma$ for some positive time.
Using Remark \ref{remark sela hiperbolica}, the position of the eigenspaces of \eqref{equacao campo normalizado Z sem F} and  Propositions \ref{proposicao convergencia para pi zero} and \ref{proposicao autovalores primeiro retorno} it is easy to see that given $p \in \Sigma = \Sigma^{s} \cup \Sigma^{d} \cup \Sigma^{c}$ the trajectories of $Z_0$  by $p$ converge to $\pi_0$.
\end{proof}

\subsection{The convergence of the trajectories}\label{secao convergencia trajetorias campo inicial}

Now we picture the scenario describing the asymptotic behavior of
$Z_0$ in $V$. As we said above, the plane $\pi_0$ is an attractor
for $\phi_{Z}(t,p)$ and separates $V$ in two open regions
$\mathcal{V^{+}} = \{ (x,y,z) \in V \, | \, x+y>0\}$ and
$\mathcal{V^{-}} = \{ (x,y,z) \in V \, | \, x+y<0\}$.

Since all points of $V$ hit $\Sigma$, in order to determine the
asymptotic behavior of a trajectory it is enough to take this
trajectory departing just from points in $\Sigma$. Consider the
partition of $\Sigma = \Sigma^{c+}_{s} \cup \Sigma^{c+}_{e} \cup
\Sigma^{e} \cup \Sigma^{c-}_{e} \cup \Sigma^{c-}_{s} \cup \Sigma^s
\cup   r_{0}^{+} \cup r_{0}^{-} \cup S_{Y}^{-} \cup S_{X}^{-}\cup
S_{Y}^{+}\cup S_{X}^{+} \cup 0 \}$, where $\Sigma^{c+}_{s} =
\Sigma^{c+} \cap V^+$, $\Sigma^{c+}_{e} = \Sigma^{c+} \cap V^-$,
$\Sigma^{c-}_{e} = \Sigma^{c-} \cap V^-$, $\Sigma^{c-}_{s} =
\Sigma^{c-} \cap V^+$,  $r_{0}^{+}=r_0 \cap \Sigma^{c+}$,
$r_{0}^{-}=r_0 \cap \Sigma^{c-}$, $S_{Y}^{-}=S_{Y} \cap V^{-}$,
$S_{X}^{-}=S_{X} \cap V^{-}$, $S_{Y}^{+}=S_{Y} \cap V^{+}$,
$S_{X}^{+}=S_{X} \cap V^{+}$ and $0=\{(0,0,0)\}$. See Figure
\ref{fig divisao convergencia}.

\begin{figure}[!h]
\begin{center}\psfrag{X}{$\Sigma$} \epsfxsize=8cm
\psfrag{A}{$\Sigma^{c+}_{s}$} \psfrag{B}{$\Sigma^{c+}_{e}$}\psfrag{C}{$\Sigma^e$}\psfrag{D}{$\Sigma^{c-}_{e}$}\psfrag{E}{$\Sigma^{c-}_{s}$}\psfrag{F}{$\Sigma^s$}\psfrag{G}{$r_{0}^{+}$}\psfrag{}{$$}\psfrag{}{$$}\psfrag{}{$$}\psfrag{}{$$}
\psfrag{H}{$S_{Y}^{-}$}\psfrag{I}{$S_{X}^{-}$}\psfrag{J}{$r_{0}^{-}$}\psfrag{K}{$S_{Y}^{+}$}\psfrag{L}{$S_{X}^{+}$}\epsfbox{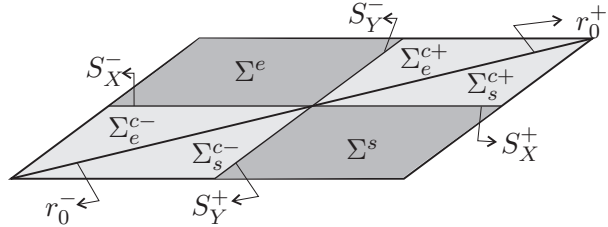} \caption{\small{Partition of $\Sigma$.}}
\label{fig divisao convergencia}
\end{center}
\end{figure}

Consider the following steps:

\begin{itemize}
\item[\textbf{(1)}]First let us analyze the trajectories in $r_0$.

\subitem\textbf{(1.i)} If $p=0$ then, according to the fifth
bullet of Definition \ref{definicao trajetorias},
$\phi_{Z}(t,p)=0$ for all $t \in \R$.

\subitem\textbf{(1.ii)} If $p \in r_{0}^{-} \cup r_{0}^{+}$
then, according to Proposition \ref{proposicao plano pi zero
invariante},  $\phi_{Z}(p)$ became restrict to $\pi_0$ and
describes a periodic orbit around the origin.

\item[\textbf{(2)}]Let us analyze the trajectories in $V^{+} \cap \Sigma$.

\subitem\textbf{(2.i)} If $p \in \Sigma^{s}$ then, according to
Proposition \ref{proposicao sing campo normalizado},
$\phi_{Z}(t,p)\rightarrow 0$ when $t \rightarrow + \infty$. By
the fourth bullet of Definition \ref{definicao trajetorias}, the
same holds when $p \in S^{+}_{Y} \cup S_{X}^{+}$.

\subitem\textbf{(2.ii)} If $p = (x_p,y_p,0) \in \Sigma^{c+}_{s}$
then we can put $y_p = - M x_p$, where $0<M<1$. With this, we
are able to explicitly calculate the successive returns
$\varphi_{Z}^{n}(p)=(\Pi_{1}^{n}(p),\Pi_{2}^{n}(p),0)$, with $n
\geq 1$. A straightforward calculation shows that
$\Pi_{2}^{n}(0)$ is positive when $2n/(2n + 1) > M$. This
implies that after the first integer $N$ such that $2N/(2N + 1)
> M$ we obtain $\Pi_{2}^{n}(p)>0$ and so $\Pi^{n}(p)$ belongs to
$\Sigma^{s}$, where $y>0$. By the analysis done in (2.i),
$\phi_{Z}(t,p)\rightarrow 0$ when $t \rightarrow + \infty$. This
means that the competition between the two attractors $\pi_0$
and $\Sigma^s$ is won by $\Sigma^s$.

\subitem\textbf{(2.iii)} If $p = (x_p,y_p,0) \in
\Sigma^{c-}_{s}$ then $\phi_{Z}(p,-2 x_p) \in \Sigma^{c+}_{s}$
and so, repeat the analysis of item (2.ii).

\item[\textbf{(3)}]Let us analyze the trajectories in $V^{-} \cap \Sigma$.

\subitem\textbf{(3.i)} If $p \in \Sigma^{c+}_{e} \cup S^{-}_{Y}$
then we can put $y_p = - R x_p$, where $0<R<1$. With this, we
are able to explicitly calculate the successive returns
$\varphi_{Z}^{n}(p)=(\Pi_{1}^{n}(p),\Pi_{2}^{n}(p),0)$, with $n
\geq 1$. A straightforward calculation shows that
$\varphi_{Z}^{n}(p)\rightarrow \infty$. So,
$\phi_{Z}(t,p)\rightarrow \infty$ when $t \rightarrow + \infty$.

\subitem\textbf{(3.ii)} If $p  \in \Sigma^{c-}_{e} \cup
S_{X}^{-}$ then $\phi_{Z}(p,-2 x_p) \in \Sigma^{c+}_{e}$ and so,
repeat the analysis of the previous bullet.

\subitem\textbf{(3.iii)} If $p \in \Sigma^{e}$ then, according
to the third bullet of Definition \ref{definicao trajetorias},
there are three choices for  $\phi_{Z}(t,p)$. When
$\phi_{Z}(t,p)=\phi_{X}(t,p)$ or when
$\phi_{Z}(p,t)=\phi_{Y}(t,p)$ the trajectory hits
$\Sigma^{c-}_{e}$ or $\Sigma^{c+}_{e}$, respectively. In both
cases we use the previous two items. When
$\phi_{Z}(t)=\phi_{Z^{s}}(t,p)$ then, by Remark \ref{remark sela
hiperbolica}, there exists an invariant manifold in $\Sigma^{e}$
converging to the origin and the other points in $\Sigma^{e}$
converge to $S^{-}_{Y} \cup S^{-}_{X}$ (and so, we use the
previous two items).  This means that the competition between
the attractor $\pi_0$ and the repulsiveness of $Z$ in $p \in
\Sigma^{c+}_{e}$ is won by the second one.

\end{itemize}

\begin{remark}\label{obs convergencia trajetorias}
As consequence of the previous analysis we are able to say that any
$Z$-trajectory, with $Z$ given by \eqref{eq Z sem F}, through $p \in
V$ either describes a periodic orbit or converges to a stationary
point or leaves any ball around the origin, according to the initial
position of $p$. 
\end{remark}

\section{Properties of an oriented perturbation of System $Z_{0}$}\label{secao propriedades sistema perturbado}

\subsection{Auxiliary results}\label{secao resultados auxiliares}

In what follows, $h : \R \rightarrow \R$ will denote the $C^\infty$-function given by
\[h(w)=\left\{\begin{array}{ll}
0, & \mbox{ if }w\leq0;\\
e^{-1/w}, &\mbox{ if }w>0.
\end{array}\right.\]

Consider the function $F_\varepsilon^{\rho}(x,y): \R^{2} \times \R$, where either $\rho=f$ or $\rho=i$, such that
\begin{equation}\label{eq F finito}F_\varepsilon^{f}(x,y)= - \varepsilon  h(x) h(-y)
(\varepsilon-x)(2\varepsilon-x)\dots(k\varepsilon-x)\end{equation}with $k \in \N$, and \begin{equation}\label{eq F infinito}F_\varepsilon^{i}(x,y)=  h(x) h(-y) \sin(\pi \varepsilon^2 /x).\end{equation}

\begin{lemma}\label{lema finitos pontos fixos trajetorias}
Consider the  function $F_\varepsilon^{f}(x,y)$ given by \eqref{eq F finito}.
\begin{enumerate}
\item[(i)] If $\varepsilon<0$ then $F_\varepsilon^f$ does not have roots in $(0,+\infty) \times \{ y \}$.
\item[(ii)] If $\varepsilon>0$ then $F_\varepsilon^f$
has exactly $k$ roots in $(0,+\infty) \times \{ y \}$, these roots are
$\{(\varepsilon,y),(2\varepsilon,y),\dots,(k\varepsilon,y)\}$.
\item[(iii)] $\displaystyle\frac{\partial F_\varepsilon^f}{\partial
x}(j\varepsilon,y)= - \epsilon h(-y)  (-1)^j \varepsilon^k
h(j\varepsilon)(k-j)! (j-1)! \mbox{ for }j\in\{1,2,\dots,k\}$.
It means that such partial derivative  at $(j\varepsilon,y)$ is
positive for $j$ odd and negative for $j$ even.
\end{enumerate}
\end{lemma}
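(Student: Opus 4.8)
The plan is to read off everything from the product structure. Set $P(x)=\prod_{m=1}^{k}(m\varepsilon-x)$, so that $F_\varepsilon^{f}(x,y)=-\varepsilon\,h(x)\,h(-y)\,P(x)$; here $y$ is treated as a fixed parameter, and the only interesting case is $h(-y)\neq0$, i.e. $y<0$ (when $h(-y)=0$ the function $F_\varepsilon^{f}$ vanishes identically on the line $(0,+\infty)\times\{y\}$ and every assertion is trivial). For $x>0$ we have $h(x)=e^{-1/x}>0$, and also $-\varepsilon\neq0$, so on $(0,+\infty)\times\{y\}$ the zero set of $F_\varepsilon^{f}$ coincides with the zero set of $P$ restricted to $x>0$, namely $\{\varepsilon,2\varepsilon,\dots,k\varepsilon\}\cap(0,+\infty)$.

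Items (i) and (ii) then follow at once. If $\varepsilon<0$ each $m\varepsilon$ with $1\le m\le k$ is negative, so $P$, and hence $F_\varepsilon^{f}$, has no zero with $x>0$. If $\varepsilon>0$ the numbers $\varepsilon,2\varepsilon,\dots,k\varepsilon$ are $k$ pairwise distinct points of $(0,+\infty)$, and by the previous paragraph these are precisely the roots of $F_\varepsilon^{f}$ there.

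For (iii), differentiate the product (legitimate since $h\in C^{\infty}$): $\partial_x F_\varepsilon^{f}(x,y)=-\varepsilon\,h(-y)\big(h'(x)P(x)+h(x)P'(x)\big)$. At $x=j\varepsilon$ one has $P(j\varepsilon)=0$, so the first summand drops and $\partial_x F_\varepsilon^{f}(j\varepsilon,y)=-\varepsilon\,h(-y)\,h(j\varepsilon)\,P'(j\varepsilon)$. By the Leibniz rule $P'(x)=-\sum_{l=1}^{k}\prod_{m\neq l}(m\varepsilon-x)$, and at $x=j\varepsilon$ every term with $l\neq j$ contains the vanishing factor $(j\varepsilon-j\varepsilon)$; hence $P'(j\varepsilon)=-\prod_{m\neq j}(m\varepsilon-j\varepsilon)=-\varepsilon^{k-1}\prod_{m\neq j}(m-j)$. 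Splitting $\prod_{m\neq j}(m-j)$ into the factors with $m<j$ (which give $(-1)^{j-1}(j-1)!$) and those with $m>j$ (which give $(k-j)!$) yields $P'(j\varepsilon)=(-1)^{j}\varepsilon^{k-1}(j-1)!\,(k-j)!$; substituting back gives the stated formula for $\partial_x F_\varepsilon^{f}(j\varepsilon,y)$. Finally, for $\varepsilon>0$ the quantities $\varepsilon^{k}$, $h(-y)$, $h(j\varepsilon)=e^{-1/(j\varepsilon)}$, $(j-1)!$ and $(k-j)!$ are all strictly positive, so the sign of $\partial_x F_\varepsilon^{f}(j\varepsilon,y)$ equals that of $-(-1)^{j}=(-1)^{j+1}$: positive for $j$ odd and negative for $j$ even.

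The computation is entirely elementary; the only place demanding care is the sign bookkeeping in evaluating $P'(j\varepsilon)$ — the minus sign produced by differentiating each factor $(l\varepsilon-x)$ in $x$, together with the $(-1)^{j-1}$ coming from the $m<j$ factors — and it is exactly this bookkeeping that makes the parity of $j$ govern the sign of the derivative.
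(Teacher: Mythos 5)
Your argument is correct and is essentially the paper's own proof: both isolate the linear factor vanishing at $x=j\varepsilon$ so that only its derivative survives in the product rule, and both read off the sign from $\prod_{m\neq j}(m-j)=(-1)^{j-1}(j-1)!\,(k-j)!$. One small remark: the value you actually obtain, $-(-1)^{j}\varepsilon^{k}h(-y)h(j\varepsilon)(j-1)!\,(k-j)!$, differs from the displayed formula in item (iii) by a stray factor of $\epsilon$ (an inaccuracy already present in the lemma's statement and in the paper's computation, e.g.\ for $k=1$ the derivative at $x=\varepsilon$ is $\varepsilon h(-y)h(\varepsilon)$, not $\varepsilon^{2}h(-y)h(\varepsilon)$), so your claim that substitution ``gives the stated formula'' should really say it gives the corrected constant; the sign conclusion, which is what the paper uses, is unaffected, and your aside about the degenerate case $h(-y)=0$ matches the paper's implicit restriction to $y<0$.
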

\begin{proof}
When $x>0$, by a straightforward calculation $F_\varepsilon^f(x,y)=0$
if, and only if, $(\varepsilon-x)(2\varepsilon-x)\dots(k\varepsilon-x)=0$.
So, the roots of $F_\varepsilon^f(x,y)$ in $(0,+\infty)$  are
$\varepsilon,2\varepsilon,\dots,k\varepsilon$. Moreover,
$$\displaystyle\frac{\partial F_\varepsilon^f}{\partial x}(x,y)= -\epsilon h(-y)
 \displaystyle\frac{\partial }{\partial x}\Big((j \varepsilon - x) H(x) \Big) = -\epsilon h(-y)
 \Big( (j \varepsilon - x) \displaystyle\frac{\partial H}{\partial x}(x) -  H(x) \Big),$$ where $H(x) = F_\varepsilon^f(x,y)/( \epsilon h(-y) (j \varepsilon -
 x))$. So,
$$\begin{array}{l}
  \displaystyle\frac{\partial F_\varepsilon^f}{\partial x}(j \varepsilon,y)  =   \epsilon h(-y) H(j \varepsilon) =  \\
   = \epsilon h(-y) \varepsilon^{k} h(j \varepsilon) (1-j)\dots((j-1)-j )((j+1)-j)\dots(k-j) \\
    = - \epsilon h(-y) \varepsilon^{k} h(j \varepsilon) (-1)^{j} \Big((j-1)\dots(j- (j-1))\Big) \Big(((j+1)-j)\dots(k-j)\Big)  \\
     =  - \epsilon h(-y) (-1)^j\varepsilon^kh(j\varepsilon)(k-j)! (j-1)!  \\
 \end{array}$$
This proves items (ii) and (iii). Item (i) follows immediately.
\end{proof}

\begin{lemma}\label{lema infinitos pontos fixos trajetorias}
Consider the  function $F_\varepsilon^{i}(x,y)$ given by \eqref{eq F infinito}.  For $\varepsilon\neq0$ the function $F_\varepsilon^i$ has infinitely many
roots in $(0,\varepsilon^2)\times \{ y \}$, these roots are $\{(\varepsilon^2,y),(\varepsilon^2/2,y),(\varepsilon^2/3,y),\dots\}$ and
$$\displaystyle\frac{\partial F_\varepsilon^i}{\partial
x}(\varepsilon^2/j,y)= h(-y) (-1)^j  (-\pi j^2/\varepsilon^2)h(\varepsilon^2/j) \mbox{ for
}j\in\{1,2,3,\dots\}.$$ It means that such derivative at  $(\varepsilon^2/j,y)$
is positive for $j$ odd and negative for $j$ even.
\end{lemma}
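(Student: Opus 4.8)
The statement to prove is Lemma \ref{lema infinitos pontos fixos trajetorias}, concerning the function $F_\varepsilon^i(x,y) = h(x)h(-y)\sin(\pi\varepsilon^2/x)$. The plan parallels the proof of Lemma \ref{lema finitos pontos fixos trajetorias} almost verbatim.

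\medskip

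The plan is as follows. First I would restrict attention to the region $x > 0$ and $y < 0$, since outside it one of the factors $h(x)$ or $h(-y)$ vanishes identically, hence $F_\varepsilon^i \equiv 0$ there and there is nothing to say (the claim is only about $(0,\varepsilon^2)\times\{y\}$ for a fixed $y$, and the interesting case is $y<0$, where $h(-y) > 0$). On $x > 0$, $y < 0$ we have $h(x) > 0$ and $h(-y) > 0$, so $F_\varepsilon^i(x,y) = 0$ if and only if $\sin(\pi\varepsilon^2/x) = 0$, i.e. $\pi\varepsilon^2/x = j\pi$ for some nonzero integer $j$, i.e. $x = \varepsilon^2/j$. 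Requiring $x \in (0,\varepsilon^2)$ forces $j \geq 2$ if we want $x < \varepsilon^2$ strictly, but including the endpoint-type value $x = \varepsilon^2$ (corresponding to $j=1$) the roots in $(0,\varepsilon^2]$ are exactly $\{\varepsilon^2/j : j = 1,2,3,\dots\}$, which accumulate at $0$; this gives the claimed countable root set.

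\medskip

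Second, I would compute the partial derivative $\partial F_\varepsilon^i/\partial x$ at these roots. Since $h(x)$ and $h(-y)$ are smooth and $h(-y)$ does not depend on $x$, we have
\[
\frac{\partial F_\varepsilon^i}{\partial x}(x,y) = h(-y)\left[ h'(x)\sin(\pi\varepsilon^2/x) + h(x)\cos(\pi\varepsilon^2/x)\cdot\left(-\frac{\pi\varepsilon^2}{x^2}\right)\right].
\]
Evaluating at $x = \varepsilon^2/j$: the first term drops out because $\sin(j\pi) = 0$, and $\cos(j\pi) = (-1)^j$, while $-\pi\varepsilon^2/x^2 = -\pi j^2/\varepsilon^2$. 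This yields
\[
\frac{\partial F_\varepsilon^i}{\partial x}(\varepsilon^2/j,y) = h(-y)(-1)^j\left(-\frac{\pi j^2}{\varepsilon^2}\right)h(\varepsilon^2/j),
\]
which is exactly the asserted formula. The sign follows at once: $h(-y) > 0$, $h(\varepsilon^2/j) > 0$, and $-\pi j^2/\varepsilon^2 < 0$, so the sign of the derivative is that of $(-1)^{j+1}$, i.e. positive for $j$ odd and negative for $j$ even.

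\medskip

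There is no real obstacle here; the lemma is a direct computation once one observes that the only active zeros come from the sine factor and that the derivative simplifies dramatically at those zeros because $\sin$ vanishes there. The one point to state carefully is the domain convention (that the claim is vacuous or trivial where $h(x)h(-y) = 0$, and genuinely about the zeros of the sine on the positive $x$-axis), mirroring how Lemma \ref{lema finitos pontos fixos trajetorias} is organized. I would present the argument in two short paragraphs — one identifying the roots, one computing the derivative — exactly in analogy with the proof of the previous lemma.
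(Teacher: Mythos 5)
Your proposal is correct and follows essentially the same route as the paper's proof: on $x>0$ the zeros come exactly from $\sin(\pi\varepsilon^2/x)=0$, and differentiating in $x$ and evaluating at $x=\varepsilon^2/j$ kills the $h'(x)\sin(\pi\varepsilon^2/x)$ term and leaves $h(-y)(-1)^j(-\pi j^2/\varepsilon^2)h(\varepsilon^2/j)$, with the stated sign alternation. Your added remarks (that the statement is only nontrivial for $y<0$ where $h(-y)>0$, and that the $j=1$ root sits at the endpoint $x=\varepsilon^2$ of the stated interval) are minor clarifications the paper glosses over, not a different argument.
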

\begin{proof}
When $x>0$, by a straightforward calculation $F_\varepsilon^i(x,y)=0$
if, and only if, $\sin(\pi \varepsilon^2/x)=0$. So, the roots of
$F_\varepsilon^i(x,y)$ in $(0,\varepsilon^2)\times \{ y \}$  are $(\varepsilon^2,y),(\varepsilon^2/2,y),(\varepsilon^2/3,y),\dots$. Moreover,
$$\displaystyle\frac{\partial F_\varepsilon^i}{\partial x}(x,y)=
h(-y) [ h^{\prime}(x) \sin(\pi \varepsilon^2/x)  + h(x)\cos(\pi \varepsilon^2/x)(-\pi\varepsilon^2/x^2)].$$ So,
$$\begin{array}{rcl}
  \displaystyle\frac{\partial F_\varepsilon^i}{\partial x}(\varepsilon^2/j,y) & = & h(-y) [ h^{\prime}(\varepsilon^2/j) \sin(\pi \varepsilon^2/j) +\\
  & + &    h(\varepsilon^2/j)\cos(\pi \varepsilon^2/j)(-\pi j^2/\varepsilon^2)] \\
&    = & h(-y)(-1)^j (-\pi j^2/\varepsilon^2) h(\varepsilon^2/j). \\
 \end{array}$$
\end{proof}

Consider $Z_0$ given by \eqref{eq Z sem F} and

\begin{equation}\label{eq Z inicial}
 Z^{\rho}_{\varepsilon}(x,y,z) = \left\{
      \begin{array}{ll}  X(x,y,z)=(-1-(x+y),1-(x+y),-y)    & \hbox{if $z \geq 0$,} \\
        Y^{\rho}_{\varepsilon}(x,y,z) = (1,-1, x + \frac{\partial F^{\rho}_{\varepsilon}}{\partial x}(x,y))                                        & \hbox{if $z \leq 0$.}
      \end{array}    \right.
\end{equation}

\begin{remark}\label{remark convergencia dos campos}
Take $Z_{\varepsilon} = Z^{\rho}_{\varepsilon}$, with $\rho=i,f$. It is easy to see that $Z_{\varepsilon} \rightarrow Z_0$ when $\varepsilon \rightarrow 0$.
\end{remark}

Associated to  \eqref{eq Z inicial} we have the normalized sliding vector field given by
\[
Z^{s}_{\rho,\varepsilon}(x,y,z)=\]\[\Big((-1-(x+y))( x +
\frac{\partial F^{\rho}_{\varepsilon}}{\partial x}(x,y)) +
y,(1-(x+y))(x + \frac{\partial F^{\rho}_{\varepsilon}}{\partial
x}(x,y)) -y,0\Big).
\]

A straightforward calculation shows that the trajectory
$\phi_{Y^{\rho}_{\varepsilon}}$ of $Y^{\rho}_{\varepsilon}$ given in
\eqref{eq Z inicial} are parameterized by
\begin{equation}\label{eq trajetorias Y rho}
\phi_{Y^{\rho}_{\varepsilon}}(t) = (t + l_1, -t + l_2, t^2/2 + l_1 t + F^{\rho}_{\varepsilon}(t + l_1,-t + l_2) + l_3 ).
\end{equation}

\begin{proposition}\label{proposicao retorno Y rho} Given an arbitrary point $(x_{0},y_{0},0) \in \Sigma^{c-}$ then \linebreak $\varphi_{Y^{\rho}_{\varepsilon}}(x_{0},y_0,0) = (t_1+x_0, -t_1 + y_0,0)$, where the $Y^{\rho}_{\varepsilon}$-fly time $t_1>0$ is given implicitly by $t_{1}^{2}/2 + x_0 t_1 + F^{\rho}_{\varepsilon}(t_1+x_0,-t_1+y_0) =0$. In particular,  $\phi_{Y^{\rho}_{\varepsilon}}(\pi_k \cap \Sigma^-) \subset (\pi_k \cap \Sigma^-)$.
\end{proposition}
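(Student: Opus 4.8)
The plan is to imitate the proof of Proposition \ref{proposicao retorno Y}, using this time the explicit parameterization \eqref{eq trajetorias Y rho} of the flow of $Y^{\rho}_{\varepsilon}$. First I would fix the integration constants by imposing $\phi_{Y^{\rho}_{\varepsilon}}(0)=(x_0,y_0,0)$: the first two coordinates of \eqref{eq trajetorias Y rho} give $l_1=x_0$ and $l_2=y_0$, while the third coordinate at $t=0$ gives $F^{\rho}_{\varepsilon}(x_0,y_0)+l_3=0$. Here the key remark is that any point $(x_0,y_0,0)\in\Sigma^{c-}$ has $y_0>0$ (because $X_3(x_0,y_0,0)=-y_0<0$), so $h(-y_0)=0$; since both $F^{\rho}_{\varepsilon}$ (for $\rho=f,i$) carry the factor $h(-y)$, both $F^{\rho}_{\varepsilon}(x_0,y_0)$ and $\partial F^{\rho}_{\varepsilon}/\partial x\,(x_0,y_0)$ vanish. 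In particular $l_3=0$, so the orbit through $(x_0,y_0,0)$ is $\phi_{Y^{\rho}_{\varepsilon}}(t)=\big(t+x_0,\,-t+y_0,\,t^2/2+x_0t+F^{\rho}_{\varepsilon}(t+x_0,-t+y_0)\big)$, and moreover $Y^{\rho}_{\varepsilon,3}(x_0,y_0,0)=x_0$, which forces $x_0<0$ at a crossing point.

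Next I would locate the return to $\Sigma=\{z=0\}$. By definition the fly time $t_1>0$ is the first positive root of $z(t)=t^2/2+x_0t+F^{\rho}_{\varepsilon}(t+x_0,-t+y_0)=0$, which is exactly the implicit relation in the statement; substituting $t_1$ into the first two coordinates of $\phi_{Y^{\rho}_{\varepsilon}}$ yields $\varphi_{Y^{\rho}_{\varepsilon}}(x_0,y_0,0)=(t_1+x_0,\,-t_1+y_0,\,0)$. Existence and positivity of $t_1$ go essentially as in Proposition \ref{proposicao retorno Y}: the quadratic part $t^2/2+x_0 t$ already vanishes at $t=-2x_0>0$, the perturbation $F^{\rho}_{\varepsilon}$ is a bounded $C^{\infty}$ term supported in $\{x>0,\,y<0\}$ (hence only active for $t>\max\{-x_0,y_0\}$ along this orbit), and $z'(0)=x_0+\partial F^{\rho}_{\varepsilon}/\partial x\,(x_0,y_0)=x_0<0$, so $z$ leaves $\Sigma$ downward and must return; for $|\varepsilon|$ small the first such $t_1$ lies near $-2x_0$.

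Finally, for the invariance statement I would note that along any trajectory of $Y^{\rho}_{\varepsilon}$ one has $\frac{d}{dt}(x+y)=1+(-1)=0$, so $x(t)+y(t)\equiv x_0+y_0$; hence if $(x_0,y_0,0)\in\pi_k$, i.e.\ $x_0+y_0=k$, the orbit segment for $t\in[0,t_1]$ stays in $\pi_k$, and since $z(t)<0$ on $(0,t_1)$ by the choice of $t_1$ it also stays in $\Sigma^-=\{z\leq0\}$, which gives $\phi_{Y^{\rho}_{\varepsilon}}(\pi_k\cap\Sigma^-)\subset\pi_k\cap\Sigma^-$. The one point that really needs care is precisely the existence, positivity and first-ness of $t_1$ once the perturbation is switched on; everything else is the same bookkeeping as in the unperturbed case, the smallness of $\varepsilon$ and the flatness of $h$ at the origin doing the remaining work.
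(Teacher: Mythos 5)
Your proposal is correct and follows essentially the same route as the paper: fix the constants in the parameterization \eqref{eq trajetorias Y rho}, take $t_1>0$ as the first positive root of the implicit equation for the third coordinate, and deduce the $\pi_k$-invariance from the fact that $x(t)+y(t)$ is constant along the flow of $Y^{\rho}_{\varepsilon}$. The extra details you supply (vanishing of $F^{\rho}_{\varepsilon}$ at the initial point because $y_0>0$, and the existence/positivity of $t_1$ for small $\varepsilon$) are sound refinements of steps the paper leaves implicit, not a different argument.
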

\begin{proof} Let $t_1 >0$ the first time such that $t_{1}^{2}/2 + x_0 t_1 + F^{\rho}_{\varepsilon}(t_1+x_0,-t_1+y_0)=0$.
Using \eqref{eq trajetorias Y rho} it is easy to see that $\varphi_{Y^{\rho}_{\varepsilon}}(x_{0},y_0,0) = (t_1+x_0, -t_1 + y_0,0)$. In particular, for $p_0=(x_{0},y_{0},0) = (x_0, -x_0 + k,0) \in \pi_k \cap \Sigma^{-}$, the first two coordinates of $\phi_{Y^{\rho}_{\varepsilon}}(t,p_0)$ are $x(t)=t + x_0$ and $y(t)=-t -x_0 + k=-(t + x_0)+k=-x(t)+k$. This concludes the proof.
\end{proof}

\begin{proposition}\label{proposicao plano pi zero invariante perturbado}
The plane $\pi_0$ is invariant by the flow of $Z^{\rho}_{\varepsilon}$, given by \eqref{eq Z inicial}.
\end{proposition}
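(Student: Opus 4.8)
The plan is to mimic exactly the proof of Proposition \ref{proposicao plano pi zero invariante} (the unperturbed case), which relied on two facts: that the $X$-flow preserves $\pi_0\cap\Sigma^+$ and that the $Y$-flow preserves $\pi_k\cap\Sigma^-$. Only the second piece changes when we pass from $Y_0$ to $Y^{\rho}_{\varepsilon}$, and Proposition \ref{proposicao retorno Y rho} already contains the needed statement $\phi_{Y^{\rho}_{\varepsilon}}(\pi_k\cap\Sigma^-)\subset(\pi_k\cap\Sigma^-)$. So the first step is to recall that the $X$-component of $Z^{\rho}_{\varepsilon}$ is literally the same vector field $X$ as in $Z_0$, hence Proposition \ref{proposicao retorno X} applies verbatim: $\phi_X(\pi_0\cap\Sigma^+)\subset\pi_0\cap\Sigma^+$. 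The second step is to invoke Proposition \ref{proposicao retorno Y rho} with $k=0$ to get $\phi_{Y^{\rho}_{\varepsilon}}(\pi_0\cap\Sigma^-)\subset\pi_0\cap\Sigma^-$.

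With those two inclusions in hand, the third step is to assemble them into invariance of $\pi_0$ under the full PSVF flow. A point of $\pi_0$ lies in $\pi_0\cap\Sigma^+$, in $\pi_0\cap\Sigma^-$, or on $\pi_0\cap\Sigma = r_0$; following Definition \ref{definicao trajetorias} the local trajectory through such a point is built by concatenating pieces of $\phi_X$, pieces of $\phi_{Y^{\rho}_{\varepsilon}}$, and possibly a piece of the sliding flow $\phi_{Z^s_{\rho,\varepsilon}}$. The first two pieces stay in $\pi_0$ by Steps 1--2. For the sliding piece one checks that the normalized sliding vector field $Z^s_{\rho,\varepsilon}$, restricted to $r_0=\{x+y=0\}\cap\Sigma$, is tangent to $r_0$: indeed on $r_0$ one has $1+(x+y)=1$ and $1-(x+y)=1$, so the two components of $Z^s_{\rho,\varepsilon}$ become $-(x+\partial_xF^{\rho}_{\varepsilon})+y$ and $(x+\partial_xF^{\rho}_{\varepsilon})-y$, which are negatives of each other, hence the sliding field points along the direction $(1,-1)$ spanning $r_0$. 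Therefore no piece of a $Z^{\rho}_{\varepsilon}$-trajectory starting on $\pi_0$ can escape $\pi_0$, and by Definition \ref{definicao trajetoria global} the whole orbit remains in $\pi_0$.

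I do not expect a genuine obstacle here; the statement is essentially a bookkeeping corollary of Propositions \ref{proposicao retorno X} and \ref{proposicao retorno Y rho}, exactly as Proposition \ref{proposicao plano pi zero invariante} was for the unperturbed model. The only point requiring a line of actual computation is the tangency of the sliding field to $r_0$, and even that reduces to the observation $x+y=0$ kills the off-diagonal structure of $F$ in the same way it did for $Z_0$. One should also note in passing that $F^{\rho}_{\varepsilon}(x,y)=h(x)h(-y)(\cdots)$ vanishes together with all its $x$-derivatives whenever $x\le 0$ or $y\ge 0$, so on the relevant portion of $\pi_0$ (where $x=-y$, i.e.\ $x$ and $-y$ have the same sign) the perturbation term is handled uniformly; in any case invariance does not depend on the precise form of $F^{\rho}_{\varepsilon}$, only on the structural fact that it is added solely to the third component of $Y$ and that $\pi_0$ is a ``diagonal'' plane for which the first two components remain mirror images.
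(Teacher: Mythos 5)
Your proof is correct and follows essentially the same route as the paper, which likewise deduces the invariance of $\pi_0$ directly from Proposition \ref{proposicao retorno X} (the $X$-part is unchanged) and Proposition \ref{proposicao retorno Y rho}. Your additional check that the sliding field is tangent to $r_0$ is a harmless extra precaution the paper does not spell out, but the argument is the same in substance.
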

\begin{proof}
By Proposition \ref{proposicao retorno Y rho} and Proposition \ref{proposicao retorno X}  we get that $\pi_0$ is invariant by the flow of $Z^{\rho}_{\varepsilon}$.
\end{proof}

\begin{remark}\label{remark plano invariante rho}
Since, by Proposition \ref{proposicao convergencia para pi zero}, we get that all trajectories of $Z_0$ converge to $\pi_0$ we obtain that, for $\varepsilon$ sufficiently small, the same holds for the trajectories of $Z^{\rho}_{\varepsilon}$.
\end{remark}


%

\begin{proposition}\label{proposicao perturbacao k ciclos} Consider an integer $k \geq 0$ and  $Z^{\rho}_{\varepsilon}$ given by \eqref{eq Z inicial}, where either $\rho=f$ or $\rho=i$. Then
  $Z^{f}_{\varepsilon}$ has exactly $k$ limit cycles and  $Z^{i}_{\varepsilon}$ has infinite many limit cycles, all of then situated in $\pi_0$.
\end{proposition}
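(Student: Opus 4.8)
The plan is to reduce the count of periodic orbits to a one-dimensional problem on the invariant plane $\pi_0=\{x+y=0\}$. By Proposition \ref{proposicao plano pi zero invariante perturbado}, $\pi_0$ is invariant under $Z^{\rho}_{\varepsilon}$, and (for $\varepsilon$ small) $r_0:=\pi_0\cap\Sigma$ is, away from the origin, a line of crossing points, since $X$ is unchanged and the perturbation of $Y_3$ is super-exponentially small near $0$. Thus $Z^{\rho}_{\varepsilon}|_{\pi_0}$ has no sliding and each of its periodic orbits meets $r_0$ transversally; parametrising $r_0$ by $s\mapsto(s,-s,0)$, such a periodic orbit is exactly a fixed point of the first return map $P:=\varphi_{Z^{\rho}_{\varepsilon}}|_{r_0}$. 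Since, by Remark \ref{remark plano invariante rho}, every trajectory of $Z^{\rho}_{\varepsilon}$ converges to $\pi_0$, a periodic orbit cannot lie off $\pi_0$; this gives the last clause of the statement and reduces everything to counting fixed points of $P$.

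Next I compute $P$ on the half-line $s>0$ (the part $s<0$ being symmetric and the origin being an equilibrium). As $X$ is untouched, Proposition \ref{proposicao retorno X} gives $\varphi_X(s,-s,0)=(-s,s,0)$, and Proposition \ref{proposicao retorno Y rho} applied to $(-s,s,0)$ gives $\varphi_{Y^{\rho}_{\varepsilon}}(-s,s,0)=(t_1-s,\,s-t_1,\,0)$, where $t_1>0$ is the first positive root of $t^2/2-st+F^{\rho}_{\varepsilon}(t-s,s-t)=0$. Writing $G(u):=F^{\rho}_{\varepsilon}(u,-u)$, this equation reads $t^2/2-st+G(t-s)=0$ and $P(s)=t_1(s)-s$. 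A fixed point $P(s)=s$ forces $t_1=2s$; substituting $t=2s$ the quadratic part cancels and leaves precisely $G(s)=0$. Conversely, if $G(s)=0$ then $t=2s$ really is the first return time: on $(0,2s)$ one has $t^2/2-st=t(t-2s)/2<0$, while $G(t-s)=F^{\rho}_{\varepsilon}(t-s,s-t)$ vanishes for $t\le s$ (there $h(t-s)=0$) and is super-exponentially small on $[s,2s]$ for $\varepsilon$ small (it carries a factor $h(\,\cdot\,)^2$), so the left-hand side cannot vanish before $t=2s$. Hence $P(s)=s\iff G(s)=0$, and since the periodic orbit through $(s,-s,0)$ also passes through $\varphi_X(s,-s,0)=(-s,s,0)$, it meets $r_0$ in the pair of points with parameters $\pm s$; so distinct periodic orbits of $Z^{\rho}_{\varepsilon}$ correspond bijectively to the roots $s>0$ of $G$.

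It remains to count these roots and to check hyperbolicity. For $\rho=f$ with $\varepsilon>0$, $G(u)=-\varepsilon\,h(u)^2(\varepsilon-u)(2\varepsilon-u)\cdots(k\varepsilon-u)$, so by Lemma \ref{lema finitos pontos fixos trajetorias} it has exactly the $k$ positive roots $\varepsilon,2\varepsilon,\dots,k\varepsilon$ (and none when $\varepsilon<0$); for $\rho=i$ with $\varepsilon\neq0$, $G(u)=h(u)^2\sin(\pi\varepsilon^2/u)$, so by Lemma \ref{lema infinitos pontos fixos trajetorias} it has the infinitely many positive roots $\varepsilon^2/j$, $j\in\N$. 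For hyperbolicity, implicit differentiation of $t^2/2-st+G(t-s)=0$ gives $t_1'(s)=\frac{t_1+G'(t_1-s)}{t_1-s+G'(t_1-s)}$, hence at a fixed point $s^{*}$ (where $t_1=2s^{*}$) one obtains $P'(s^{*})=t_1'(s^{*})-1=\frac{s^{*}}{s^{*}+G'(s^{*})}$. At each positive root $s^{*}$ of $G$ the term $\partial_y F^{\rho}_{\varepsilon}(s^{*},-s^{*})$ vanishes, because it retains the factor $(\varepsilon-x)\cdots(k\varepsilon-x)$, resp. $\sin(\pi\varepsilon^2/x)$, which is zero at $s^{*}$; hence $G'(s^{*})=\partial_x F^{\rho}_{\varepsilon}(s^{*},-s^{*})$, which is nonzero by Lemma \ref{lema finitos pontos fixos trajetorias}(iii), resp. Lemma \ref{lema infinitos pontos fixos trajetorias}, with sign alternating in $j$. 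Therefore $P'(s^{*})\neq1$, so each periodic orbit is a hyperbolic limit cycle (attracting when $G'(s^{*})>0$, repelling when $G'(s^{*})<0$) and in particular isolated; together with the preceding paragraph this shows that $Z^{f}_{\varepsilon}$ has exactly $k$ limit cycles and $Z^{i}_{\varepsilon}$ has infinitely many, all of them in $\pi_0$.

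The delicate step is the second paragraph, namely pinning down that $t=2s$ is the \emph{first} return time: this requires the fortunate cancellation of the quadratic part at $t=2s$ together with the support of $F^{\rho}_{\varepsilon}$ in $\{x>0\}$ and its super-exponential smallness for $\varepsilon$ small, which together forbid an earlier zero. Once the reduced map is put in the normal form $P(s)=s\iff G(s)=0$, the counting of its fixed points and the computation of $P'(s^{*})$ are routine consequences of Lemmas \ref{lema finitos pontos fixos trajetorias} and \ref{lema infinitos pontos fixos trajetorias}.
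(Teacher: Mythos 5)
Your proof is correct and follows essentially the same route as the paper: use invariance of $\pi_0$ plus global attraction to confine limit cycles to $\pi_0$, compose the half-return maps $\varphi_X$ and $\varphi_{Y^{\rho}_{\varepsilon}}$ on $r_0$, identify fixed points with the zeros of $F^{\rho}_{\varepsilon}(x_0,-x_0)$, and count them via Lemmas \ref{lema finitos pontos fixos trajetorias} and \ref{lema infinitos pontos fixos trajetorias}. The main difference is that you justify explicitly that $t=2s$ is the \emph{first} $Y^{\rho}_{\varepsilon}$-return time (and that the orbit stays in $z\le 0$), which the paper simply asserts, and you fold in a hyperbolicity computation that the paper defers to Proposition \ref{proposicao ciclos sao hiperbolicos}.
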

\begin{proof}
According to Remark \ref{remark plano invariante rho}, $\pi_0$ is a global attractor for $Z^{\rho}_{\varepsilon}$. Also, according to Proposition \ref{proposicao plano pi zero invariante perturbado}, $\pi_0$ is invariant by the flow of $Z^{\rho}_{\varepsilon}$. So, if there exists limit cycles, then they are situated at $\pi_0$. Moreover when we restrict the flow of $Z^{\rho}_{\varepsilon}$ to $\pi_0$, by Propositions \ref{proposicao retorno Y rho} and \ref{proposicao retorno X}, the fixed points of the first return map $\varphi_{Z^{\rho}_{\varepsilon}} = \varphi_{Y^{\rho}_{\varepsilon}} \circ \varphi_{X}$ occurs when $t=t_3=4 x_0$. So, take $p_0 = (x_{0},-x_0,0)$ and we get
\begin{equation}\label{equacao aplicacao retorno em x=-y}
\varphi_{Z^{\rho}_{\varepsilon}}(p_0) = \phi_{Y^{\rho}_{\varepsilon}}(2 x_0, -p_0) =  (x_0, - x_0, F^{\rho}_{\varepsilon}(x_0,- x_0)).
\end{equation}
When $\rho=f$, by Item (ii) of Lemma \ref{lema finitos pontos fixos trajetorias}, $$\varphi_{Z^{\rho}_{\varepsilon}}(x_{0},-x_0,0)=(x_0, - x_0,0) \Leftrightarrow x_0 = j \varepsilon \mbox{ with } \varepsilon>0 \mbox{ and } j=1,2, \hdots, k.$$Therefore, $Z^{f}_{\varepsilon}$ has $k$ limit cycles, all of then situated in $\pi_0$.
When $\rho=i$, by  Lemma \ref{lema infinitos pontos fixos trajetorias}, $$\varphi_{Z^{\rho}_{\varepsilon}}(x_{0},-x_0,0)=(x_0, - x_0,0) \Leftrightarrow x_0 = \varepsilon^{2}/j \mbox{ with }  j=1,2, \hdots.$$Therefore, $Z^{i}_{\varepsilon}$ has infinite many limit cycles, all of then situated in $\pi_0$.
\end{proof}

%

\begin{proposition}\label{proposicao ciclos sao hiperbolicos}
All limit cycles in Proposition \ref{proposicao perturbacao k
ciclos} are hyperbolic. Moreover, for $\epsilon>0$, if
$j=even$ then it is attractor and if $j=odd$ then it is repeller.
\end{proposition}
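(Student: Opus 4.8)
The plan is to reduce everything to a one-dimensional return map on the invariant plane. By Propositions~\ref{proposicao plano pi zero invariante perturbado} and~\ref{proposicao perturbacao k ciclos}, each limit cycle of $Z^{\rho}_{\varepsilon}$ sits on $\pi_0$ and, inside $\pi_0$, is a periodic orbit of the planar field $Z^{\rho}_{\varepsilon}|_{\pi_0}$ meeting the line $r_0=\pi_0\cap\Sigma$; parametrizing the branch $r_0^{+}$ by its first coordinate $x_0>0$, such a cycle is a fixed point $x_0^{*}$ of the scalar first return map $P_{\varepsilon}:=\varphi_{Z^{\rho}_{\varepsilon}}|_{r_0^{+}}=(\varphi_{Y^{\rho}_{\varepsilon}}\circ\varphi_{X})|_{r_0^{+}}$, and by Proposition~\ref{proposicao perturbacao k ciclos} these fixed points are $x_0^{*}=j\varepsilon$ when $\rho=f$ and $x_0^{*}=\varepsilon^{2}/j$ when $\rho=i$. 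Since $\pi_0$ is a global attractor of $Z^{\rho}_{\varepsilon}$ (Remark~\ref{remark plano invariante rho}), the multiplier of such a cycle in the direction transverse to $\pi_0$ has modulus strictly less than $1$ (it is a small perturbation of the hyperbolically contracting transverse multiplier of $Z_0$ estimated in Proposition~\ref{proposicao convergencia para pi zero}); hence the cycle is hyperbolic exactly when its remaining multiplier $P_{\varepsilon}'(x_0^{*})$ differs from $1$, and it is an attractor (resp.\ a repeller) for $Z^{\rho}_{\varepsilon}$ precisely when $0<P_{\varepsilon}'(x_0^{*})<1$ (resp.\ $P_{\varepsilon}'(x_0^{*})>1$).

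Next I would compute $P_{\varepsilon}$ in implicit form. Composing Proposition~\ref{proposicao retorno X}, which gives $\varphi_{X}(x_0,-x_0,0)=(-x_0,x_0,0)$ with $X$-fly time $2x_0$, with Proposition~\ref{proposicao retorno Y rho}, and writing $x_1=P_{\varepsilon}(x_0)$, one is led to the relation $x_1^{2}=x_0^{2}-2F^{\rho}_{\varepsilon}(x_1,-x_1)$ along the branch $x_1>0$; here one uses that on the relevant $Y^{\rho}_{\varepsilon}$-trajectory the second coordinate equals the negative of the first, so that $F^{\rho}_{\varepsilon}$ is evaluated on the diagonal $y=-x$. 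Setting $G(x):=F^{\rho}_{\varepsilon}(x,-x)$ and differentiating implicitly gives $P_{\varepsilon}'(x_0)=x_0/(P_{\varepsilon}(x_0)+G'(P_{\varepsilon}(x_0)))$, hence at a fixed point
\[
P_{\varepsilon}'(x_0^{*})=\frac{x_0^{*}}{x_0^{*}+G'(x_0^{*})}.
\]
At $x_0^{*}$ the polynomial factor (for $\rho=f$) or the sine factor (for $\rho=i$) of $F^{\rho}_{\varepsilon}$ vanishes while $h(x_0^{*})\neq0$, so $G'(x_0^{*})$ coincides with $\partial F^{\rho}_{\varepsilon}/\partial x$ evaluated at $(x_0^{*},-x_0^{*})$, which is exactly the number computed in item~(iii) of Lemma~\ref{lema finitos pontos fixos trajetorias} and in Lemma~\ref{lema infinitos pontos fixos trajetorias}.

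Finally I would read off the conclusion from those two lemmas. They show $G'(x_0^{*})\neq0$, so $P_{\varepsilon}'(x_0^{*})\neq1$ and every cycle is hyperbolic; and they give the \emph{sign} of $G'(x_0^{*})$ for $\varepsilon>0$, alternating with the index $j$, while $|G'(x_0^{*})|$ carries the super-exponentially small factor $h(x_0^{*})=e^{-1/x_0^{*}}$ and is therefore much smaller than $x_0^{*}$ for small $\varepsilon$. Substituting this sign and size information into the displayed formula puts $P_{\varepsilon}'(x_0^{*})$ in $(0,1)$ for one parity of $j$ and strictly above $1$ for the other, which is the asserted attractor/repeller dichotomy. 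I expect the only real obstacle to be the sign bookkeeping: one has to track the sign of $F^{\rho}_{\varepsilon}$ carefully through the composition $\varphi_{Y^{\rho}_{\varepsilon}}\circ\varphi_{X}$ and through the implicit differentiation, and to verify that the cross term $\partial F^{\rho}_{\varepsilon}/\partial y$ genuinely drops out at the fixed points so that $G'(x_0^{*})$ equals the partial derivative tabulated in the lemmas; everything else (well-definedness and smoothness of $P_{\varepsilon}$ near $x_0^{*}$ by the implicit function theorem, and the estimate $|G'(x_0^{*})|\ll x_0^{*}$) is routine.
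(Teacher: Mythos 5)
Your overall strategy is the same one the paper intends (its own proof is little more than the instruction to ``consider the derivatives'' in Item (iii) of Lemma \ref{lema finitos pontos fixos trajetorias} and in Lemma \ref{lema infinitos pontos fixos trajetorias}), and the part you actually carry out is correct and considerably more precise than the paper's text: the reduction to the scalar return map on $r_0^{+}$, the implicit relation $x_1^2=x_0^2-2F^{\rho}_{\varepsilon}(x_1,-x_1)$ obtained from Propositions \ref{proposicao retorno X} and \ref{proposicao retorno Y rho}, the formula $P_{\varepsilon}'(x_0^{*})=x_0^{*}/\bigl(x_0^{*}+G'(x_0^{*})\bigr)$, and the observation that $\partial F^{\rho}_{\varepsilon}/\partial y$ vanishes at the fixed points (the polynomial, resp.\ sine, factor vanishes there), so that $G'(x_0^{*})$ is exactly the quantity tabulated in the two lemmas. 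Hyperbolicity then follows as you say, since $G'(x_0^{*})\neq 0$ and $|G'(x_0^{*})|\ll x_0^{*}$.

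The gap is the step you explicitly postpone: you never determine which parity of $j$ gives $0<P_{\varepsilon}'<1$ and which gives $P_{\varepsilon}'>1$, asserting only that the outcome ``is the asserted dichotomy''. That sign bookkeeping is not routine here, because doing it with the paper's stated conventions gives the opposite of the statement. Indeed, for $\varepsilon>0$ Lemma \ref{lema finitos pontos fixos trajetorias}(iii) gives $G'(j\varepsilon)=\frac{\partial F^{f}_{\varepsilon}}{\partial x}(j\varepsilon,-j\varepsilon)>0$ for $j$ odd and $<0$ for $j$ even, so your formula yields $P_{\varepsilon}'(j\varepsilon)\in(0,1)$ for $j$ odd (attractor) and $P_{\varepsilon}'(j\varepsilon)>1$ for $j$ even (repeller) --- the reverse of the ``Moreover'' in the proposition; equivalently, $G(x)=-\varepsilon h(x)^2(\varepsilon-x)\cdots(k\varepsilon-x)<0$ on $(0,\varepsilon)$, so by your relation the orbits in $\pi_0$ spiral outward near the origin, which also contradicts the claimed attractivity of the origin for $k=0$, $\varepsilon>0$ (Remark \ref{obs origem}, Theorem A(d)). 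So either there is a sign slip hidden in your reduction (I do not find one), or the leading sign of $F^{f}_{\varepsilon}$ in \eqref{eq F finito} or the parity in the proposition is misstated in the paper. Either way, a complete proof of this proposition must carry out the sign computation explicitly and commit to the resulting parity; as written, your argument stops exactly where the content of the ``Moreover'' part lies, and the hedge ``for one parity \dots for the other'' conceals a genuine discrepancy that needs to be resolved.
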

\begin{proof}
In fact, in order to prove this we must consider the expression of
the derivatives in Item (iii) of Lemma \ref{lema finitos pontos
fixos trajetorias} and Lemma \ref{lema infinitos pontos fixos
trajetorias}. Observe that when $k=0$ in Proposition \ref{proposicao perturbacao k
ciclos} there is not limit cycles. In this Item (iii) of Lemma \ref{lema finitos pontos
fixos trajetorias} also is true and the origin is an attractor equilibrium of the system.
\end{proof}

\begin{remark}\label{obs origem}
	Observe that when $k=0$ in Proposition \ref{proposicao perturbacao k
		ciclos} the system has not limit cycles. In this Item (iii) of Lemma \ref{lema finitos pontos
		fixos trajetorias} also is true and the origin is an attractor equilibrium of the system.
\end{remark}

\subsection{About the convergence of the trajectories of $Z^{\rho}_{\varepsilon}$, given by \eqref{eq Z inicial}}\label{secao convergencia perturbado}

Now we will proceed the analysis of the behavior of the trajectories of \eqref{eq Z inicial} in a neighborhood of the T-singularity (at the origin).

As stated in Propositions \ref{proposicao plano pi zero invariante
perturbado} and Remark \ref{remark plano invariante rho}, $\pi_0$ is
invariant by the flow of \eqref{eq Z inicial} and all trajectories
of \eqref{eq Z inicial} in a neighborhood of the T-singularity
converge to it. However, after the perturbation imposed to $Z_0$,
the asymptotic behavior of the trajectories can drastically changes.
The biggest change occurs in $V^{-}$ when $\rho=f$ and $\epsilon
>0$. Following Proposition \ref{proposicao ciclos sao hiperbolicos},
we can separate the analysis of this situation, essentially, in
three cases:

$\bullet$ When $k=0$ we get that $Z^{\rho}_{\varepsilon}$ does not
presents limit cycles. So,  the trajectories in $V^{-} \backslash
\Sigma^{e}$ becomes increasingly distant from the origin until
certain moment. However, since the origin is an attractor when we
restrict the analysis to $\pi_0$, there exists a moment from which
the trajectory starts to converge to origin. So, the T-singularity
at the origin of \eqref{eq Z inicial} is asymptotically stable.

$\bullet$ When $k\neq 0$ is even  we get that
$Z^{\rho}_{\varepsilon}$ presents $k$ hyperbolic \textbf{nested}
limit cycles and $\Gamma_{k}$, the bigger of them, is attractor. So,
the trajectories in $V^{-} \backslash \Sigma^{e}$ becomes
increasingly distant from the origin until certain moment. However,
since the bigger limit cycle is an attractor when we restrict the
analysis to $\pi_0$, there exists a moment from which the trajectory
starts to converge to it. The same holds for all attractor limit
cycles at the interior of $\Gamma_{k}$. So, the T-singularity at the
origin of \eqref{eq Z inicial} is asymptotically stable. Also, each
one of the repeller hyperbolic limit cycles at the interior of
$\Gamma_k$ is repeller and each one of the attractor hyperbolic
limit cycles is an attractor for the trajectories in $V^{-} \cup
\pi_0$. In fact, there are topological half-cylinder of orbits
converging to each it one of them.

$\bullet$ When $k\neq 0$ is odd we get that $Z^{\rho}_{\varepsilon}$
presents $k$ hyperbolic \textbf{nested} limit cycles and
$\Gamma_{k}$, the bigger of them, is repeller. So, there are
trajectories in $V^{-} \backslash \Sigma^{e}$ leaving any
neighborhood of the origin. Also, there are trajectories in $V^{-}
\backslash \Sigma^{e}$ converging to the origin or, when $k\geq 3$,
to  an inward hyperbolic limit cycle. Moreover, the T-singularity at
the origin of \eqref{eq Z inicial} is asymptotically stable. Each
one of the repeller hyperbolic limit cycles is a repeller and each
one of the attractor hyperbolic limit cycles is an attractor for the
trajectories in $V^{-}\cup \pi_0$. In fact, there are topological
half-cylinder of orbits  converging to each it one of them.

\section{Proof of main results}\label{secao prova resultados}

Now we prove the main results of the paper:

\begin{proof}[\textbf{Proof of Theorem A:}] Item (a): It follows from Remark \ref{remark convergencia dos campos}.

Item (b): It follows from Propositions \ref{proposicao perturbacao k
ciclos} and \ref{proposicao ciclos sao hiperbolicos}.

Item (c): It follows from Propositions \ref{proposicao pi zero eh atrator para Z rho} and Remark \ref{remark plano invariante rho}.

Item (d): It follows from Remark \ref{obs origem}.

\end{proof}

Before to prove Theorem B, let us define the classical notion of codimension of vector fields.

%

\begin{definition}
Consider $\Theta(W) \subset \Omega^r$ a small neighborhood a vector field $W$. We say that $W$ has codimension $k$ if it appears $k$ distinct topological types of vector fields in $\Theta(W)$.
\end{definition}

\begin{proof}[\textbf{Proof of Theorem B:}] Suppose that the codimension of the
 T-singularity in \ref{eq Z sem F} is $m < \infty$. Then, in a neighborhood of $Z_0$ there are PSVFs of, at most, $m$ distinct topological types. This is a contradiction due Theorem A. So, the codimension of this singularity is infinite.
\end{proof}

\section{Conclusion}\label{secao conclusão}

As usual, the main aim of the perturbation theory is to approximate
a given dynamical system by a more familiar one, regarding the
former as a perturbation of the latter. The problem is to deduce
dynamical properties of the \textit{unperturbed} from the
\textit{perturbed} case. In this sense, we focus on certain PSVFs
$Z^{\rho}_{\varepsilon}$ which are deformations of $Z_0$.

In what follows we present a rough description of some properties
that  $Z_0=(X,Y_0)$ and
$Z^{\rho}_{\varepsilon}=(X,Y^{\rho}_{\varepsilon})$ enjoy
simultaneously. Some of them are:

\begin{itemize}
\item The origin is an equilibrium point of both $Z_0=(X,Y_0)$ and $Z^{\rho}_{\varepsilon}=(X,Y^{\rho}_{\varepsilon})$. Moreover, the tangency sets $S_{X}$ and $S_{Y_{0}}$ (resp. $S_{Y^{\rho}_{\varepsilon}}$), meet transversally at the origin,

\item The  plane $\pi_0 = \{y+x=0\}$ is $Z_0$ and $Z^{\rho}_{\varepsilon}$-invariant.


\item  The sliding vector fields associated to $Z_0$ and $Z^{\rho}_{\varepsilon}$ are topologically equivalent.
\end{itemize}

Also, there are properties that $Z_0$ and $Z^{\rho}_{\varepsilon}$
do not enjoy simultaneously. For example:

\begin{itemize}
\item The T-singularity at the origin of $Z^{\rho}_{\varepsilon}$ is asymptotically stable.


    \item The PSVF $Z_0$ restricted to $\pi_0$ has a center while in $Z^{\rho}_{\varepsilon}$ this center is perturbed giving rise to hyperbolic limit cycles at $\pi_0$.

\end{itemize}

%

Finally, as a conclusion of this paper we are able to say that the T-singularity of the PSVF \eqref{eq Z sem F} has infinite codimension.

\section{Appendix}\label{secao apendice}

Now we illustrate the theoretical analysis performed by means of some numerical simulations. In the next illustrations we use the computer program entitled "Mathematica".

We use the following line of commands:

\[solutions =
 Table[First[
   NDSolve[\{ x'[t] == If[z[t] > 0,\]\[ -1 - (x[t] + y[t]), 1],
     y'[t] == If[z[t] > 0, 1 - (x[t] + y[t]), -1],\]\[
     z'[t] == If[z[t] > 0, -y[t], x[t]], x[0] == \theta,
     y[0] == -\theta, \]\[z[0] == 0 \}, \{x, y, z\}, \{t, 0,
     2.5\}]], \{\theta, 0.1, 2 \pi - 0.1, 0.1\}];\]

     \[solutions2 =
 Table[First[
   NDSolve[\{ x'[t] == If[z[t] > 0,\]\[ -1 - (x[t] + y[t]), 1],
     y'[t] == If[z[t] > 0, 1 - (x[t] + y[t]), -1],\]\[
     z'[t] == If[z[t] > 0, -y[t], x[t]], x[0] == \cos[\theta]/4,
     y[0] == \sin[\theta]/4,\]\[ z[0] == 0 \}, \{x, y, z\}, \{t, 0,
     2.5\}]], \{\theta, 0.1, 2 \pi - 0.1, 0.1\}];\]

\[c1 = ContourPlot3D[z, \{x, -.4, .4\}, \{y, -.4, .4\}, \{z, -.5,
.5\},\]\[
   Contours \rightarrow 0, Mesh \rightarrow False];\]

   \[c2 = ContourPlot3D[y, \{x, -.5, .5\}, \{y, -.4, .4\}, \{z, -.5, .5\},\]\[
   Contours \rightarrow 0, Mesh \rightarrow False];\]

   \[c3 = ContourPlot3D[x+y, \{x, -.5, .5\}, \{y, -.4, .4\}, \{z, -.5, .5\},\]\[
   Contours \rightarrow 0, Mesh \rightarrow False];\]

 The command \[p3 = ParametricPlot3D[
  Evaluate[\{x[t], y[t], z[t]\} /. solutions], \{t, 0, 2.5\}, \]\[
  PlotStyle \rightarrow \{Thickness[.0015], Red\}]; Show[p3, ImageSize \rightarrow Large]\]generates Figure \ref{fig mathematica centro}.

\begin{figure}[!h]
\begin{center}\psfrag{X}{$\Sigma$} \epsfxsize=6cm
\epsfbox{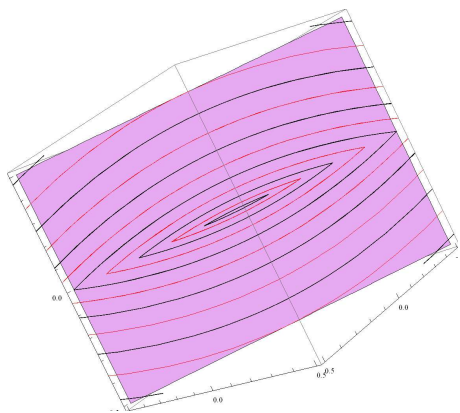} \caption{\small{Phase Portrait of the center of $Z_0$.}}
\label{fig mathematica centro}
\end{center}
\end{figure}

 The command \[p4 = ParametricPlot3D[
  Evaluate[\{x[t], y[t], z[t]\} /. solutions2], \{t, 0, 2.5\},
  \]\[
  PlotStyle \rightarrow \{Thickness[.0015], Red\}]; Show[p4, ImageSize \rightarrow Large]\]generates Figure \ref{fig mathematica trajetorias global}.

\begin{figure}[!h]
\begin{center}\psfrag{X}{$\Sigma$} \epsfxsize=7cm
\epsfbox{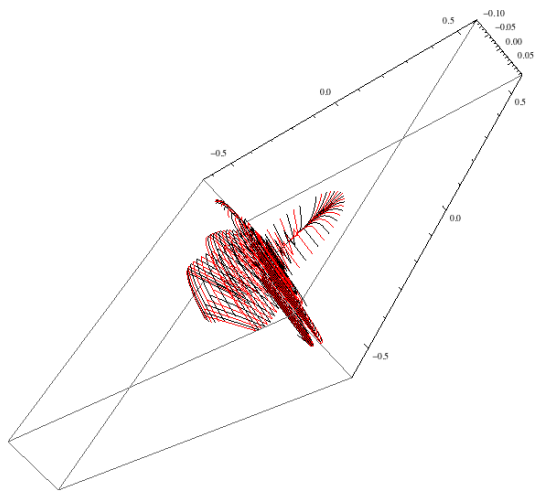} 
\end{center}\caption{\small{Phase Portrait of trajectories in a neighborhood of the origin.}}
	\label{fig mathematica trajetorias global}
\end{figure}

%


\vspace{.7cm}

\noindent {\textbf{Acknowledgments.} The first author was partially supported by \linebreak grant\#2014/02134-7, S\~{a}o Paulo Research Foundation (FAPESP), CAPES-Brazil grant number  88881.030454/2013-01 from the program CSF-PVE and a CNPq-Brazil grant number 443302/2014-6. The second author is
partially supported by grant \#2012/18780-0, S\~{a}o Paulo Research Foundation (FAPESP).


\end{document}